\newtheorem{theorem}{Theorem}[section]
\newtheorem{lemma}[theorem]{Lemma}
\newtheorem{corollary}[theorem]{Corollary}
\theoremstyle{definition}
\newtheorem{remark}[theorem]{Remark}
\newcommand{\Aut}{\mathop{\mathrm{Aut}}}
\newcommand{\Alt}{\mathop{\mathrm{Alt}}}
\newcommand{\Sym}{\mathop{\mathrm{Sym}}}
\newcommand{\soc}{\mathop{\mathrm{soc}}}
\newcommand{\Suz}{\mathop{\mathrm{Suz}}}
\newcommand{\SL}{\mathop{\mathrm{SL}}}
\newcommand{\SU}{\mathop{\mathrm{SU}}}
\newcommand{\GL}{\mathop{\mathrm{GL}}}
\newcommand{\Sp}{\mathop{\mathrm{Sp}}}
\newcommand{\AGL}{\mathop{\mathrm{AGL}}}
\newcommand{\PSL}{\mathop{\mathrm{PSL}}}
\newcommand{\PGL}{\mathop{\mathrm{PGL}}}
\newcommand{\PSU}{\mathop{\mathrm{PSU}}}
\newcommand{\PSp}{\mathop{\mathrm{PSp}}}
\newcommand{\PGSp}{\mathop{\mathrm{PGSp}}}
\newcommand{\GSp}{\mathop{\mathrm{GSp}}}
\newcommand{\GammaL}{\mathop{\mathrm{\Gamma L}}}
\newcommand{\AGammaL}{\mathop{\mathrm{A\Gamma L}}}
\newcommand{\PGammaL}{\mathop{\mathrm{P\Gamma L}}}
\newcommand{\PSigmaL}{\mathop{\mathrm{P\Sigma L}}}
\newcommand{\POmega}{\mathop{\mathrm{P\Omega}}}
\newcommand{\GF}{\mathop{\mathrm{GF}}}
\newcommand{\vGa}{\vec{\Gamma} }
\newcommand*{\nonsplit}{\,{}^{\displaystyle .}}
\newcommand{\End}{\mathop{\mathrm{End}}}
\newcommand{\magma}{{\sc Magma}}
\newcommand{\GAP}{{\textsf GAP}}
\def\D{{\rm D}}
\def\C{{\rm C}}
\def\Z{{\rm Z}}
\def\V{{\rm V}}
\def\J{{\rm J}}
\def\Th{{\rm Th}}
\def\M{{\rm M}}
\def\N{{\rm N}}
\newcommand{\ZZ}{\mathbb{Z}}
\newcommand{\thmp}{{\rm Th}}
\renewcommand{\geq}{\geqslant}
\renewcommand{\leq}{\leqslant}
\begin{document}
\title[Primitive groups with a suborbit of length $5$ ]{Primitive permutation groups with a suborbit of length $5$ and vertex-primitive graphs of valency $5$}
\author[J. B. Fawcett, M. Giudici, C. H. Li, C. E. Praeger, G. Royle, G. Verret]{Joanna B. Fawcett, Michael Giudici, Cai Heng Li, Cheryl E. Praeger, Gordon Royle, Gabriel Verret}

\address{Joanna B. Fawcett, Michael Giudici, Cai Heng Li, Cheryl E. Praeger, Gordon Royle
\newline\indent and Gabriel Verret$^*$,
\newline\indent Centre for the Mathematics of Symmetry and Computation, 
\newline\indent The University of Western Australia, 
\newline\indent 35 Stirling Highway, Crawley, WA 6009, Australia.} 

\address{$^*$Also affiliated with FAMNIT, University of Primorska, 
\newline\indent Glagolja\v{s}ka 8, SI-6000 Koper, Slovenia.
\newline\indent Current address: Department of Mathematics, The University of Auckland,
\newline\indent Private Bag 92019, Auckland 1142, New Zealand.}

\email{Joanna.Fawcett@uwa.edu.au}
\email{Michael.Giudici@uwa.edu.au}
\email{Cai.Heng.Li@uwa.edu.au}
\email{Cheryl.Praeger@uwa.edu.au}
\email{Gordon.Royle@uwa.edu.au}
\email{g.verret@auckland.ac.nz}

\begin{abstract}
We classify finite primitive permutation groups having a suborbit of length $5$. As a corollary, we obtain a classification of finite vertex-primitive graphs of valency $5$. In the process, we also classify finite almost simple groups that have a maximal subgroup isomorphic to $\Alt(5)$ or $\Sym(5)$. 
\end{abstract}

\maketitle

\section{Introduction}

All graphs and groups considered in this paper are finite. Let $G$ be a transitive permutation group on a set $\Omega$ and let $\omega\in\Omega$. An orbit of the point-stabiliser $G_\omega$ is called a \emph{suborbit} of $G$ and it is \emph{non-trivial} unless it is $\{\omega\}$.
It is an easy exercise to show that if a primitive permutation group has a non-trivial suborbit of length one, then it must be regular of prime order, while if it has a suborbit of length two, it must be dihedral of degree an odd prime. 

Primitive groups with a suborbit of length three have a more complicated structure. Classifying them is non-trivial and was accomplished by Wong~\cite{Wong} using the work of Sims~\cite{Sims}. The classification of primitive groups with a suborbit of length four is even more difficult. After some partial results by Sims~\cite{Sims} and Quirin~\cite{Quirin}, this was finally completed by Wang~\cite{Wang} using the classification of finite simple groups.

Wang then turned his attention to the case of primitive groups with a suborbit of length $5$. He proved some strong partial results~\cite{WangSoluble5,WangA5} but was unable to complete this project. This classification is the main result of our paper.

\begin{theorem}\label{theo:mainOrbital}
A primitive permutation group $G$ has a suborbit of length $5$ if and only if $(G,G_v)$ appears in Table~\ref{table:sporadic} or~\ref{table:infinite}.
\end{theorem}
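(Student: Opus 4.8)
The plan is to pass to an orbital (di)graph and combine the O'Nan--Scott theorem with an analysis of the local action on the suborbit. Write $v$ for the base point $\omega$, to match the tables, fix a suborbit $\Delta$ of length $5$, choose $\alpha\in\Delta$, and set $H=G_{v\alpha}$, so that $|G_v:H|=5$. The action of $G_v$ on $\Delta$ (equivalently, on the cosets of $H$) is a transitive permutation group $L$ of degree $5$, and since the transitive subgroups of $\Sym(5)$ are precisely $\ZZ_5$, the dihedral group of order $10$, the Frobenius group $\AGL(1,5)$ of order $20$, $\Alt(5)$ and $\Sym(5)$, we have $L$ among these, with $G_v/K\cong L$ where $K=G_v^{[\Delta]}$ is the pointwise stabiliser of $\Delta$. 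For the (easy) converse direction of the theorem I would verify, by hand for the infinite families and in \magma\ for the sporadic entries, that each pair in Tables~\ref{table:sporadic} and~\ref{table:infinite} is primitive and genuinely has a suborbit of length $5$; the substance is the forward direction.

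For the forward direction I would work through the O'Nan--Scott types of $G$. The diagonal, product-action and twisted-wreath types have socle a product of at least two nonabelian simple groups, so the structure of $G_v$ and the resulting subdegrees are tightly constrained; these types either reduce to the almost simple analysis through a simple component or are eliminated outright. For the affine type, write $G=V\rtimes G_v$ with $V\cong\ZZ_p^d$ and $G_v\le\GL(d,p)$ irreducible (irreducibility being equivalent to primitivity). A suborbit of length $5$ is exactly a $G_v$-orbit of length $5$ on $V\setminus\{0\}$; since $G_v$ is irreducible these $5$ vectors span $V$, forcing $d\le5$, so that we are reduced to low-dimensional linear groups over $\GF(p)$, handled by Aschbacher's subgroup structure theorem together with known data on small classical groups.

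The main work is the almost simple type $T\le G\le\Aut(T)$, where $G_v$ is maximal in $G$. Since $G$ is primitive, every nontrivial orbital graph is connected; applying this to the connected valency-$5$ orbital graph, and using that $5$ is prime so that $L$ is primitive on $\Delta$, I would bound $|G_v|$ by a Weiss-type argument on this locally primitive graph. With $|G_v|$ bounded, $G_v$ lies among finitely many isomorphism types, although the corresponding $T$ may still form infinite families. In the principal subcase $K=1$ and $G_v\cong L\in\{\Alt(5),\Sym(5)\}$ is a maximal subgroup, and this is exactly where I would invoke the classification of all almost simple groups with a maximal subgroup isomorphic to $\Alt(5)$ or $\Sym(5)$ (established separately in this paper), obtained by running through the simple groups $T$ via the classification of finite simple groups and their maximal-subgroup data. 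The subcases with $K\neq1$, or with $L\in\{\ZZ_5,D_{10},\AGL(1,5)\}$, then require analysing $G_v$ as an extension of a $\{2,3,5\}$-group by $L$ and matching it against the same data.

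I expect the almost simple type to be the main obstacle. Producing a complete and correct list of maximal $\Alt(5)$ and $\Sym(5)$ subgroups across every family of simple groups is delicate, especially in low rank and small characteristic where containments and fusion proliferate; and eliminating the near-misses with nontrivial local kernel $K$ typically demands careful order and fusion arguments or explicit machine computation. Assembling the survivors into the two tables, and separating the self-paired suborbits (which yield the vertex-primitive graphs of valency $5$ in the corollary) from the non-self-paired ones, is the final bookkeeping step.
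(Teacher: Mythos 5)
Your overall strategy is sound and you correctly identify the main engine (the classification of almost simple groups with a maximal $\Alt(5)$ or $\Sym(5)$, which is Theorem~\ref{theo:max} of the paper), but there is one genuine gap: you never state the criterion that decides whether a maximal subgroup $M\cong\Alt(5)$ or $\Sym(5)$ actually produces a suborbit of length $5$. Having such a maximal subgroup is necessary but not sufficient. The paper's Lemma~\ref{lemma:selfnormalising}(3) shows that, with $H$ of index $5$ in $M=G_v$, a suborbit of length $5$ exists if and only if $\N_G(H)>H$; this is why the rows of Tables~\ref{table:maxA5} and~\ref{table:maxS5} with $\N_G(H)/H=1$ (for instance $\J_2$ with maximal $\Alt(5)$, or $\Alt(7)$, $\M_{11}$, $\PSL(3,5)$ with maximal $\Sym(5)$) must be \emph{discarded} when assembling Tables~\ref{table:sporadic} and~\ref{table:infinite}. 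Your proposal treats the passage from the maximal-subgroup classification to the suborbit tables as ``final bookkeeping'' about self-paired versus non-self-paired suborbits, which would leave these spurious entries in. The same normaliser criterion is also what drives the affine analysis in the paper ($\C_V(H)\neq 0$ forces the trivial module to appear in $V{\downarrow}H$), so omitting it affects more than one branch of the argument.

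On the route itself: the paper does not run a full O'Nan--Scott analysis. It quotes Wang's classification for the case where $G_v^{\Delta}$ is soluble (local action $\ZZ_5$, $\D_{10}$ or $\AGL(1,5)$) and for the unfaithful insoluble case, and then, once $G_v\cong G_v^{\Delta}\in\{\Alt(5),\Sym(5)\}$ is $2$-transitive, invokes Praeger's theorem on doubly transitive subconstituents to conclude immediately that $G$ is almost simple or has a unique regular minimal normal subgroup, the latter being forced to be abelian by a short composition-factor argument. Your plan to redo the soluble and unfaithful cases from scratch via Weiss-type stabiliser bounds is feasible in principle for valency $5$ but is a substantial project in its own right (it is essentially the content of Wang's two papers, one of which contains errors the present paper corrects), and your dismissal of the diagonal, product and twisted wreath types as ``eliminated outright'' needs an actual argument in the soluble-local-action subcase, where the $2$-transitivity shortcut is unavailable. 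Your affine observation that the five vectors of $\Delta$ span $V$, forcing $d\le 5$, is correct and is a nice alternative entry point to the paper's representation-theoretic treatment, though one still needs the $\N_G(H)>H$ condition (equivalently $\C_V(H)\neq 0$) to pin down which modules occur.
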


Note that  each row in Table~\ref{table:sporadic} corresponds to a unique primitive permutation group $G$ with a suborbit of length $5$ whereas, in Table~\ref{table:infinite}, there exists one group for each value of the parameter $p$. (Throughout this paper, $p$ always denotes a prime, while $\D_n$ denotes a dihedral group of order $2n$.)

\begin{table}[hhhhhh]
\begin{center}
\begin{TAB}(@){cccc}{|c|ccccccccccccc|}% (rows,min,max)[tabcolsep]{columns}{rows}
     
& $G$ & $G_v$  & $|G:G_v|$ \\   
(1)&    $\Alt(5)$ & $\D_{5}$  & $6$ \\ 
(2)&        $\Sym(5)$ & $\AGL(1,5)$  & $6$ \\                    
(3)&   $\PGL(2,9)$ & $\D_{10}$ & $36$\\
(4)&$\M_{10}$ & $\AGL(1,5)$ & $36$\\
(5)&$\PGammaL(2,9)$ & $\AGL(1,5)\times\ZZ_2$ & $36$\\
(6)& $\PGL(2,11)$ & $\D_{10}$ & $66$\\  
(7)&    $\Alt(9)$ & $(\Alt(4)\times\Alt(5))\rtimes\ZZ_2$ & $126$\\ 
(8)&  $\Sym(9)$ & $\Sym(4)\times\Sym(5)$ & $126$\\ 
(9)&    $\PSL(2,19)$ & $\D_{10}$ & $171$\\  
(10)& $\Suz(8)$ & $\AGL(1,5)$ & $1\,456$ \\ 
 (11)&  $\J_3$ & $\AGL(2,4)$  & $17\,442$\\ 
(12)&  $\J_3\rtimes\ZZ_2$ & $\AGammaL(2,4)$  & $17\,442$\\ 
(13)&  $\Th$ & $\Sym(5)$  & $756\,216\,199\,065\,600$ \\      
\end{TAB}
\caption{\small{Primitive groups with a suborbit of length $5$: sporadic examples.}}\label{table:sporadic}
\end{center}
\end{table}

\begin{table}[hhhhhh]
\begin{center}
\begin{TAB}(@){ccccc}{|c|cccccccccccccc|}% (rows,min,max)[tabcolsep]{columns}{rows}
    
  & $G$ & $G_v$  & $|G:G_v|$ & Conditions  \\   
  (1)& $\ZZ_p\rtimes\ZZ_5$ & $\ZZ_5$ & $p$& $p\equiv 1 \pmod 5$\\ 
(2)& $\ZZ_p^2\rtimes\ZZ_5$ & $\ZZ_5$ & $p^2$& $p\equiv -1 \pmod 5$\\ 
(3)& $\ZZ_p^4\rtimes\ZZ_5$ & $\ZZ_5$ & $p^4$& $p\equiv \pm2 \pmod 5$\\ 
(4)& $\ZZ_p^2\rtimes\D_5$ & $\D_5$ & $p^2$& $p\equiv \pm1 \pmod 5$\\ 
(5)& $\ZZ_p^4\rtimes\D_5$ & $\D_5$ & $p^4$& $p\equiv \pm2 \pmod 5$\\ 
(6)& $\ZZ_p^4\rtimes\AGL(1,5)$ & $\AGL(1,5)$ & $p^4$& $p\neq 5$\\ 
(7)& $\ZZ_p^4\rtimes\Alt(5)$ & $\Alt(5)$  & $p^4$&$p\neq 5$\\             
(8)& $\ZZ_p^4\rtimes\Sym(5)$ & $\Sym(5)$  & $p^4$&$p\neq 5$\\
(9)&  $\PSL(2,p)$ & $\Alt(5)$ & $\frac{p^3-p}{120}$& $p\equiv \pm 1,\pm 9 \pmod{40}$\\ 
(10)& $\PSL(2,p^2)$ & $\Alt(5)$ & $\frac{p^6-p^2}{120}$& $p\equiv\pm 3\pmod {10}$ \\ 
(11)&  $\PSigmaL(2,p^2)$ & $\Sym(5)$ & $\frac{p^6-p^2}{120}$& $p\equiv\pm 3\pmod {10}$ \\ 
(12)& $\PSp(6,p)$ & $\Sym(5)$  & $\frac{p^9(p^6-1)(p^4-1)(p^2-1)}{240}$&$p\equiv\pm 1 \pmod{8}$ \\
(13)& $\PSp(6,p)$ & $\Alt(5)$  & $\frac{p^9(p^6-1)(p^4-1)(p^2-1)}{120}$ & $p\equiv \pm 3,\pm 13 \pmod{40}$ \\
(14)& $\PGSp(6,p)$ & $\Sym(5)$  & $\frac{p^9(p^6-1)(p^4-1)(p^2-1)}{120}$&$p\equiv  \pm 3 \pmod{8}$, $p\geq 11$ \\
\end{TAB}
\caption{\small{Primitive groups with a suborbit of length $5$: infinite families.}}\label{table:infinite}
\end{center}
\end{table}

The classification of primitive groups with suborbits of length three or four was used by Li, Lu and Maru\v{s}i\v{c}~\cite{LiLuMar} to obtain a classification of arc-transitive vertex-primitive graphs of valency three or four. Similarly, as an application of Theorem~\ref{theo:mainOrbital}, we prove the following:

\begin{theorem}\label{theo:mainGraph}
A $5$-valent graph $\Gamma$ is vertex-primitive if and only if $(\Aut(\Gamma),\Aut(\Gamma)_v)$ appears in Table~\ref{table:main}.
\end{theorem}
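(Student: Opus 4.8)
The plan is to derive Theorem~\ref{theo:mainGraph} from Theorem~\ref{theo:mainOrbital} by first showing that every vertex-primitive graph of valency $5$ is in fact arc-transitive, so that its automorphism group is one of the primitive groups with a self-paired suborbit of length $5$ listed in Tables~\ref{table:sporadic} and~\ref{table:infinite}, and then determining the full automorphism group of each associated orbital graph.

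First I would set $G=\Aut(\Gamma)$ and fix a vertex $v$. Since $\Gamma$ is connected (its connected components form a $G$-invariant partition, and primitivity together with valency $5>0$ forces a single component), the neighbourhood $\Gamma(v)$ is a $G_v$-invariant set of size $5$, hence a union of suborbits whose lengths form a partition of $5$. The key observation is that the only partitions of $5$ all of whose parts are at least $2$ are $5$ and $3+2$. A part equal to $1$ would be a non-trivial suborbit of length $1$, forcing $G$ to be regular of prime order; then $\Gamma$ would be a Cayley graph of a cyclic group, whose valency must be even (a symmetric connection set in a group of odd order has even size), contradicting valency $5$. The partition $3+2$ is excluded because a suborbit of length $2$ forces $G$ to be dihedral of prime degree, whose point-stabiliser $\ZZ_2$ cannot have an orbit of length $3$. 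Hence $\Gamma(v)$ is a single self-paired suborbit of length $5$, so $G$ is arc-transitive and appears in Theorem~\ref{theo:mainOrbital}.

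Next I would go through the groups of Tables~\ref{table:sporadic} and~\ref{table:infinite} and retain those possessing a self-paired suborbit of length $5$, discarding the rest as they yield only orbital digraphs. In particular, the families with point-stabiliser $\ZZ_5$ have odd order and so admit no self-paired non-trivial suborbit (an element reversing an arc would have even order), and are eliminated at once; for the remaining groups one records each self-paired suborbit of length $5$ and forms the corresponding connected, undirected, $5$-valent orbital graph $\Gamma$. Since any overgroup of a primitive group is primitive and, as $G\leq\Aut(\Gamma)$ is arc-transitive, $\Aut(\Gamma)$ retains a suborbit of length $5$, the full automorphism group $\Aut(\Gamma)$ is again one of the groups of Theorem~\ref{theo:mainOrbital}.

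The main work, and the principal obstacle, is then to compute $\Aut(\Gamma)$ exactly for each such $\Gamma$ and to decide when it strictly exceeds the group $G$ from which $\Gamma$ was built. For the sporadic rows this is a finite check, carried out in \magma{} by constructing the orbital graph and computing its automorphism group, and then identifying isomorphic graphs arising from different rows. For the infinite families the difficulty is to determine $\Aut(\Gamma)$ uniformly over all admissible primes $p$: one must rule out extra automorphisms for all but finitely many $p$ and pin down the containments among $\PSL(2,p)$, $\PGL(2,p)$, $\PSigmaL(2,p^2)$, $\PSp(6,p)$, $\PGSp(6,p)$ and their relatives, so that only the genuine full automorphism groups are recorded. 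Assembling the surviving pairs $(\Aut(\Gamma),\Aut(\Gamma)_v)$ then yields Table~\ref{table:main}, completing the proof.
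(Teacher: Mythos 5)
Your overall skeleton matches the paper's: prove arc-transitivity, invoke Theorem~\ref{theo:mainOrbital}, then identify the full automorphism groups. Your arc-transitivity argument is sound (and close to the paper's, which handles the fixed-point case by noting a regular abelian group of odd order admits the inversion map as an extra automorphism). But there are two genuine problems in the second half. First, your claim that the families with point-stabiliser $\ZZ_5$ ``have odd order and so admit no self-paired non-trivial suborbit'' is false: row (3) of Table~\ref{table:infinite} includes $p=2$, and $\ZZ_2^4\rtimes\ZZ_5$ has order $80$; moreover every suborbit there is self-paired, since every non-identity element of $\ZZ_2^4$ is an involution and translation by $u+v$ reverses the arc $(u,v)$. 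That group does produce a $5$-valent orbital graph (the Clebsch graph); it is excluded from Table~\ref{table:main} only because its full automorphism group is $\ZZ_2^4\rtimes\Sym(5)$, not because no graph exists. The paper avoids this trap by treating all affine cases uniformly: a symmetric connection set of size $5$ in an elementary abelian group must contain an involution, forcing $p=2$ and at most $32$ vertices. Your filter would also need real work for rows (4)--(8) of Table~\ref{table:infinite} to decide which length-$5$ suborbits are self-paired for odd $p$; you do not address this.

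Second, and more seriously, you explicitly identify the determination of $\Aut(\Gamma)$ for the infinite families as ``the main work and the principal obstacle'' but then do not carry it out; as written the proposal reduces the theorem to an unspecified computation over infinitely many primes. The paper resolves this without any fresh computation by combining three ingredients you do not invoke: (i) the closure argument (which you do state) that $\Aut(\Gamma)$ again appears in the tables; (ii) Lemma~\ref{lemma:selfnormalising}(2) together with the data $\N_G(H)/H$ from Theorem~\ref{theo:max}, which shows that $\N_G(H)/H$ has at most one involution and hence that each pair $(G,G_v)$ with $G_v\cong\Alt(5)$ or $\Sym(5)$ supports at most one $G$-arc-transitive graph of valency $5$ (with the two conjugacy classes, where they occur, fused by an outer automorphism and so giving isomorphic graphs); and (iii) the explicit containments among the groups of Tables~\ref{table:maxA5} and~\ref{table:maxS5} --- e.g.\ the groups in rows (5), (9), (10) of Table~\ref{table:maxA5} sit inside those of rows (7) and (14) of Table~\ref{table:maxS5} --- which decide exactly when $G$ fails to be the full automorphism group. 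Without naming this mechanism (or an equivalent one), the passage from Theorem~\ref{theo:mainOrbital} to Table~\ref{table:main} is not actually proved.
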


Note that a few well-known graphs appear in Table~\ref{table:main}: the Clebsch graph in row (1), the Sylvester graph in row (2), the Odd graph $\mathrm{O}_5$ in row (4) and, when $p=3$, the complete graph on $6$ vertices in row (9) (recall that $\PSigmaL(2,9)\cong\Sym(6)$).

\begin{table}[hhhhh]
\begin{center}
\begin{TAB}(@){ccccc}{|c|ccccccccccc|}% (rows,min,max)[tabcolsep]{columns}{rows}
& $\Aut(\Gamma)$ & $\Aut(\Gamma)_v$  & $|\V(\Gamma)|$ &Conditions\\                                       
 (1)&   $\ZZ_2^4\rtimes\Sym(5)$ & $\Sym(5)$  & 16&\\ 
(2)&    $\PGammaL(2,9)$ & $\AGL(1,5)\times\ZZ_2$ & 36&\\
(3)& $\PGL(2,11)$ & $\D_{10}$ & 66&\\  
(4)&   $\Sym(9)$ & $\Sym(4)\times\Sym(5)$ & 126& \\ 
(5)& $\Suz(8)$ & $\AGL(1,5)$ & $1\,456$& \\ 
(6)&  $\J_3\rtimes2$ & $\AGammaL(2,4)$  & $17\,442$&\\ 
(7)&  $\Th$ & $\Sym(5)$  & $756\,216\,199\,065\,600$& \\ 
(8)&  $\PSL(2,p)$ & $\Alt(5)$ & $\frac{p^3-p}{120}$& $p\equiv \pm 1,\pm 9 \pmod{40}$\\
(9)&  $\PSigmaL(2,p^2)$ & $\Sym(5)$ & $\frac{p^6-p^2}{120}$& $p\equiv\pm 3\pmod {10}$ \\ 
(10)&  $\PSp(6,p)$ & $\Sym(5)$ & $\frac{p^9(p^6-1)(p^4-1)(p^2-1)}{240}$&$p\equiv\pm 1 \pmod{8}$ \\
(11)&  $\PGSp(6,p)$ & $\Sym(5)$ & $\frac{p^9(p^6-1)(p^4-1)(p^2-1)}{120}$&$p\equiv  \pm 3 \pmod{8}$, $p\geq 11$ \\ 
\end{TAB}
\caption{\small{Vertex-primitive graphs of valency $5$.}}\label{table:main}
\end{center}
\end{table}

In the process of proving Theorem~\ref{theo:mainOrbital}, we are led to classify almost simple groups admitting a maximal subgroup isomorphic  to $\Alt(5)$ or $\Sym(5)$.

\begin{theorem}\label{theo:max}
An almost simple group $G$ has a maximal subgroup $M$ isomorphic to $\Alt(5)$ or $\Sym(5)$ if and only if $G$ appears in Table~\ref{table:maxA5} or~\ref{table:maxS5}, respectively. Moreover, the third column in these tables gives the number $c$ of conjugacy classes of such subgroups in $G$, while the fourth column gives the structure of $\N_G(H)/H$, where $H$ is a subgroup of index $5$ in $M$.
\end{theorem}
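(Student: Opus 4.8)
The plan is to invoke the classification of finite simple groups and to work through the possibilities for the socle $T$ of $G$ family by family, after a uniform reduction. Put $A=\soc(M)$, so that $A\cong\Alt(5)$ whether $M\cong\Alt(5)$ or $M\cong\Sym(5)$. Since $A$ is simple and nonsolvable while $\Aut(T)/T$ is solvable, the normal subgroup $A\cap T\trianglelefteq A$ cannot be trivial, so $A\le T$. As $A=\soc(M)\trianglelefteq M$ we have $M\le\N_G(A)$, and maximality of $M$ gives $\N_G(A)=M$ unless $A\trianglelefteq G$; the latter forces $A\trianglelefteq T$ and hence $T=A=\Alt(5)$, a degenerate case handled directly. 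Thus, away from $T=\Alt(5)$, we have $M=\N_G(A)$, $M\cap T=\N_T(A)$ and $|G:T|\le 2$. The theorem therefore reduces to classifying, up to conjugacy, the subgroups $A\cong\Alt(5)$ of each simple group $T$ for which $\N_G(A)\in\{\Alt(5),\Sym(5)\}$ is maximal in the corresponding $G$ with $T\le G\le\Aut(T)$, together with a count of the conjugacy classes and the computation of $\N_G(H)/H$.

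For $T=\Alt(n)$, maximality of $M$ in $G\in\{\Alt(n),\Sym(n)\}$ forces the induced action of $M$ on $n$ points to be primitive, which restricts $n$ to the handful of primitive permutation degrees of $\Alt(5)$ and $\Sym(5)$ (namely $5$, $6$ and $10$); the resulting small groups are then settled by inspection of their maximal subgroups. For sporadic $T$, I would read off the answer from the (now complete) determination of the maximal subgroups of the sporadic groups together with the \textsc{Atlas}, confirming the relevant cases and eliminating the others by direct computation in \magma\ or \GAP.

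The substantial case is $T$ of Lie type. Here the key leverage is that $|A|=60$ is bounded: an embedding $A\hookrightarrow T$ arises from a projective representation of $\Alt(5)$, equivalently from an ordinary representation of $\Alt(5)$ or of its double cover $2.\Alt(5)\cong\SL(2,5)$, and the ordinary and modular representation theory of these two groups is completely known, with all faithful irreducibles of small dimension. For classical $T$ this bounds the dimension of the natural module, and hence the rank of $T$, while the field of definition produces the infinite families parametrised by $p$. I would run $A$ through Aschbacher's classes, show it lies in the class $\mathcal{S}$ of almost simple irreducible subgroups, and then use the classification of maximal subgroups of low-dimensional classical groups to decide maximality, determine the number of conjugacy classes, and extract the precise congruence conditions on $p$; this produces the $\PSL(2,\cdot)$ and $\PSp(6,\cdot)$ families of Tables~\ref{table:maxA5} and~\ref{table:maxS5}. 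For exceptional $T$ I would appeal to the known classifications of $\Alt(5)$- and $\Sym(5)$-subgroups of exceptional groups and their normalisers.

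The main obstacle is the Lie-type bookkeeping. Deciding maximality exactly requires ruling out that $A$ is contained in a larger subgroup---either a geometric subgroup or another member of class $\mathcal{S}$---pinning down exactly when $A$ preserves the form defining $T$ and is not realisable over a proper subfield, and tracking conjugacy-class fusion under diagonal and field automorphisms in order both to count the classes $c$ and to compute $\N_G(H)/H$ consistently. These are precisely the points where the congruence conditions on $p$ in the tables originate, and where the most care is needed.
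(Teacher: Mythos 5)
Your overall strategy coincides with the paper's: reduce to $A=\soc(M)\cong\Alt(5)$ lying in $T$ with $M=\N_G(A)$ and $|G:T|\le 2$, then run through the socles via the classification, using the representation theory of $\Alt(5)$ and $2\nonsplit\Alt(5)$ to bound the dimension of the natural module in the classical case, \cite{BHRD} for the low-dimensional classical groups, \cite{Craven} for exceptional groups, and the Atlas plus computation for sporadic groups. The detailed work of computing $\N_G(H)/H$ for $\PSL(2,q)$ and $\PSp(6,p)$ via Brauer characters and stabilisers of decompositions is exactly where the paper also spends its effort, so on the Lie-type side your outline is sound.

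There is, however, a genuine error in your treatment of $T\cong\Alt(n)$: maximality of $M$ in $\Alt(n)$ or $\Sym(n)$ does \emph{not} force $M$ to act primitively on the $n$ points, so restricting to the primitive permutation degrees $5$, $6$, $10$ of $\Alt(5)$ and $\Sym(5)$ is unjustified and, concretely, misses row (1) of Table~\ref{table:maxS5}: the intransitive subgroup $\Sym(5)\cong(\Sym(5)\times\Sym(2))\cap\Alt(7)$ of type $(5,2)$ is maximal in $\Alt(7)$. The correct argument is the one the paper uses: if $M$ is intransitive (resp.\ imprimitive) and maximal, then $M$ is the full stabiliser in $G$ of a subset (resp.\ partition), hence contains $\Alt(k)\times\Alt(n-k)$ (resp.\ has $\Sym(k)^m\cap\Alt(n)$ as a normal subgroup), which is incompatible with $|M|\le 120$ once $n\ge 11$; the finitely many cases $n\le 10$ must then all be checked directly, not just those where $M$ is primitive. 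A secondary caveat: your "direct computation" for sporadic $T$ glosses over the case $G=\Th$, where determining $\N_G(H)/H$ is genuinely delicate (the minimal permutation degree $143\,127\,000$ forces one to work inside the $2$-local maximal subgroups and reconcile fusion by comparing invariants), though this is an implementation issue rather than a logical gap.
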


\begin{table}[hhhhh]
\begin{center}
\begin{TAB}(@){ccccc}{|c|cccccccccc|}% (rows,min,max)[tabcolsep]{columns}{rows}  
 & $G$  &$c$& $\N_G(H)/H$&Conditions\\
 (1) &$\Sym(5)$ &$1$& $\ZZ_2$&  \\                                   
(2) & $\J_2$  &$1$&$1$ & \\           
 (3) &$\PSL(2,p)$ &$2$& $1$& $p\equiv\pm 11, \pm 19 \pmod{40}$\\ 
 (4) &$\PSL(2,p)$ &$2$& $\ZZ_2$& $p\equiv\pm 1, \pm 9 \pmod{40}$\\ 
 (5) &$\PSL(2,p^2)$ &$2$& $\ZZ_2$& $p\equiv\pm 3 \pmod{10}$\\ 
 (6) &$\PSL(2,2^{2r})$ &$1$&$1$& $r$ prime\\ 
 (7) &$\PSL(2,5^r)$ &$1$&$1$& $r$ odd prime\\  
(8)&$\PSp(6,3)$ &$1$ & $\ZZ_3$ &  \\
(9)&  $\PSp(6,p)$ &$1$ &$\ZZ_{p-1}$& $p\equiv 13,37,43,67 \pmod{120}$\\ 
(10)&$\PSp(6,p)$ &$1$ &$\ZZ_{p+1}$& $p\equiv 53,77,83,107 \pmod{120}$\\  
\end{TAB}
\caption{\small{Almost simple groups with maximal $\Alt(5)$.}}\label{table:maxA5}
\end{center}
\end{table}

\begin{table}[hhhhh]
\begin{center}
\begin{TAB}(@){ccccc}{|c|cccccccccccccc|}% (rows,min,max)[tabcolsep]{columns}{rows}  
     
 & $G$ &$c$ & $\N_G(H)/H$&Conditions\\                    
(1) &    $\Alt(7)$ &$1$& $1$&  \\ 
(2) &  $\M_{11}$ &$1$& $1$&  \\     
(3) &    $\M_{12}\rtimes\ZZ_2$ &$1$&$1$ &  \\ 
(4) &  $\J_2\rtimes\ZZ_2$ &$1$& $1$&  \\  
(5) &  $\Th$  &$1$& $\ZZ_2$&\\ 
(6) &  $\PSL(2,5^2)$&$2$&$1$ &  \\  
(7) &  $\PSigmaL(2,p^2)$ &$2$& $\ZZ_2$& $p\equiv\pm 3 \pmod{10}$\\ 
(8) &  $\PSL(2,2^{2r})\rtimes\ZZ_2$&$1$&$1$ & $r$ odd prime\\ 
(9) &  $\PGL(2,5^r)$&$1$&$1$ & $r$ odd prime\\ 
(10) &  $\PSL(3,4)\rtimes\langle\sigma\rangle$&$1$&$1$ & $\sigma$ a graph-field automorphism \\                  
(11) &  $\PSL(3,5)$&$1$& $1$& \\ 
(12) & $\PSp(6,p)$ &$2$&$\ZZ_2$& $p\equiv\pm 1 \pmod{8}$ \\ 
 (13) & $\PGSp(6,3)$ &$1$& $1$&  \\ 
 (14) & $\PGSp(6,p)$ &$1$& $\ZZ_2$& $p\equiv\pm 3 \pmod{8}$, $p\geq 11$ \\ 
\end{TAB}
\caption{\small{Almost simple groups with maximal $\Sym(5)$.}}\label{table:maxS5}
\end{center}
\end{table}

%These can be found in Tables~\ref{table:maxA5} and~\ref{table:maxS5}, respectively. (See Theorem~\ref{theo:max}.)

As a consequence of our results, we are also able to prove the following  two corollaries. (A graph is called \emph{half-arc-transitive} if its automorphism group acts transitively on its vertex-set and on its edge-set, but not on its arc-set.) 

\begin{corollary}\label{cor:halfarc}
There is no half-arc-transitive vertex-primitive graph of valency $10$.
\end{corollary}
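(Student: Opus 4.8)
The plan is to argue by contradiction: suppose $\Gamma$ is a vertex-primitive half-arc-transitive graph of valency $10$ and set $A:=\Aut(\Gamma)$. Since $A$ is transitive on the vertices and the edges of $\Gamma$ but not on its arcs, the stabiliser $A_v$ has exactly two orbits on the neighbourhood $\Gamma(v)$; these orbits have equal size $5$ and, regarded as suborbits of $A$, they are paired with each other and neither is self-paired (an arc-reversing automorphism would force arc-transitivity). Thus $A$ is a primitive permutation group with a non-self-paired suborbit $\Delta$ of length $5$, and $\Gamma$ is the underlying graph of the orbital digraph $\vGa$ of $\Delta$. As $A$ preserves $\vGa$ we have $A\le\Aut(\vGa)\le\Aut(\Gamma)=A$, so $A=\Aut(\vGa)=\Aut(\Gamma)$. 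By Theorem~\ref{theo:mainOrbital}, $(A,A_v)$ appears in Table~\ref{table:sporadic} or~\ref{table:infinite}.

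The strategy is to examine each such entry and show that $A$ cannot be the full automorphism group of a half-arc-transitive graph: either $A$ has no non-self-paired suborbit of length $5$, or the symmetrised graph $\Gamma$ admits an automorphism reversing the arcs of $\vGa$, so that $\Aut(\Gamma)\supsetneq A$ is in fact arc-transitive. The self-paired suborbits of length $5$ are exactly those producing the undirected valency-$5$ graphs of Table~\ref{table:main}, and for the almost simple entries whether a length-$5$ suborbit (determined by an index-$5$ subgroup $H\le A_v$) is self-paired is governed by the column $\N_G(H)/H$ of Theorem~\ref{theo:max}: the suborbit is self-paired precisely when some element of $\N_A(H)$ interchanges $v$ with the corresponding neighbour. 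Reading this off, for the families in rows (9)--(12) and (14) of Table~\ref{table:infinite} the value $\N_A(H)/H\cong\ZZ_2$ shows that the suborbit of length $5$ is self-paired, so these yield only the valency-$5$ graphs of Table~\ref{table:main} and no valency-$10$ graph; the same holds for all of the sporadic entries except $\PSL(2,19)$.

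For the genuine candidates I would produce the arc-reversing automorphism directly. For the affine groups of rows (1)--(8) of Table~\ref{table:infinite} (with $p$ odd, the case $p=2$ collapsing to the valency-$5$ Clebsch graph) the neighbour set of $\vGa$ is an $A_v$-orbit $\Delta\subseteq\ZZ_p^d$ with $-\Delta\neq\Delta$, the connection set of $\Gamma$ is the symmetric set $\Delta\cup(-\Delta)$, and the map $x\mapsto -x$ fixes $v=0$, interchanges the two orbits $\Delta$ and $-\Delta$, and hence together with $A_v$ acts transitively on $\Gamma(v)$; thus $\Gamma$ is arc-transitive. The remaining case, $\PSp(6,p)$ with $A_v=\Alt(5)$ (row (13) of Table~\ref{table:infinite}), is genuinely different: here $\N_A(H)/H$ is cyclic of order $p\pm1$, acting on the set of points fixed by $H=\Alt(4)$, so $A$ does possess non-self-paired suborbits of length $5$, paired off by this cyclic action. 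The point is that the overgroup $\PGSp(6,p)$ of index $2$ acts on the same vertex set and its diagonal automorphism inverts this cyclic group; consequently $\PGSp(6,p)$ fuses each such pair of suborbits into a single self-paired suborbit of length $10$, acting arc-transitively on $\Gamma$, so that $\Aut(\Gamma)\supsetneq A$.

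The main obstacle is precisely this last family: one must verify, using the subgroup structure of $\PSp(6,p)$ established in the proof of Theorem~\ref{theo:max}, that the diagonal automorphism really does act on $\N_A(H)/H$ by inversion (equivalently, that $\N_{\PGSp(6,p)}(H)/H$ is dihedral), so that the required arc-reversing reflection is available. The sporadic exception $\PSL(2,19)$, whose stabiliser $\D_{10}$ is not covered by Theorem~\ref{theo:max}, can be disposed of directly: one checks in \magma\ that its non-self-paired suborbits of length $5$ symmetrise to graphs on which $\PGL(2,19)$ acts arc-transitively. Confirming that rows (9)--(12) and (14) contribute no further non-self-paired suborbit of length $5$, and that the affine argument applies for every admissible $p$, is then routine.
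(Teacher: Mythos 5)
Your proposal is correct and follows essentially the same route as the paper: reduce via Theorem~\ref{theo:mainOrbital} and Lemma~\ref{lemma:selfnormalising} to the affine case (killed by the inversion automorphism) and to the case $G\cong\PSp(6,p)$ with $G_v\cong\Alt(5)$, then use the index-$2$ overgroup $\PGSp(6,p)$ acting on the same vertex set to supply the arc-reversing automorphism. The one step you defer --- that the outer half of $\N_{\PGSp(6,p)}(H)$ inverts the cyclic group $\N_{\PSp(6,p)}(H)/H$ --- does follow from the computations in the proof of Theorem~\ref{theo:max} (the element of $\widehat{R}\setminus\widehat{H}$ interchanges the two totally isotropic summands and hence inverts the centralising torus), and this is precisely the fact the paper exploits, phrased there instead via the $10$-valent orbital digraph of $\PGSp(6,p)$ being an undirected graph equal to $\Gamma$.
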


\begin{corollary}\label{cor:halfarc2}
There are infinitely many half-arc-transitive vertex-primitive graphs of valency $12$.
\end{corollary}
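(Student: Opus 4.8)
The plan is to exhibit an explicit infinite family of such graphs inside the family $\PSL(2,p)$ with maximal $\Alt(5)$ provided by Theorem~\ref{theo:max}. By rows (3) and (4) of Table~\ref{table:maxA5}, the group $G=\PSL(2,p)$ has a maximal subgroup $M\cong\Alt(5)$ whenever $p\equiv\pm1,\pm9,\pm11,\pm19\pmod{40}$, and by Dirichlet's theorem each of these congruence classes contains infinitely many primes. For such $p$ I would consider the primitive action of $G$ on the set $\Omega$ of cosets of $M$, of degree $(p^3-p)/120$ as in row (9) of Table~\ref{table:infinite}. Since $\D_5$ is a maximal subgroup of $\Alt(5)$ of index $6$, the idea is to locate a suborbit of length $6$, arising from a double coset $MgM$ with $M\cap M^g\cong\D_5$, and to build the associated orbital graph $\Gamma$ on $\Omega$.

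First I would confirm that such a length-$6$ suborbit genuinely occurs for all sufficiently large $p$ in a suitable congruence class, by determining the possible isomorphism types of the intersections $M\cap M^g$ (equivalently, the suborbits of $G$ on $\Omega$). Granting this, the central claim is that the suborbit is \emph{non-self-paired}: then $\Gamma$, defined as the undirected orbital graph obtained from this suborbit together with its paired suborbit, has valency $6+6=12$. As $G$ is transitive on the arcs of the underlying orbital digraph, $G$ is transitive on the vertices and on the edges of $\Gamma$, while non-self-pairing says exactly that $G$ is not transitive on the arcs of $\Gamma$. Hence $\Gamma$ is half-arc-transitive with respect to $G$, and it is connected and vertex-primitive because $G$ is primitive on $\Omega$.

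It then remains to upgrade half-arc-transitivity from $G$ to the full group $\Aut(\Gamma)$. Here I would use the standard fact, reflected in Table~\ref{table:maxA5}, that the two $G$-classes of $\Alt(5)$ are fused in $\Aut(G)=\PGL(2,p)$, so that $\N_{\PGL(2,p)}(M)=M$ and $\Sym(5)\not\le\PGL(2,p)$. Consequently $\Omega$ is not $\PGL(2,p)$-invariant (the coset action of $\PGL(2,p)$ on $\Alt(5)$-cosets restricts to two copies of $\Omega$), so no outer automorphism of $G$ acts on $\Gamma$. Since $\Aut(G)=\PGL(2,p)$ for prime $p$, any primitive overgroup of $G$ preserving $\Gamma$ must have socle strictly larger than $\PSL(2,p)$; ruling this out for large $p$ (the valency $12$ being far smaller than the degree) gives $\Aut(\Gamma)=\PSL(2,p)$. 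Thus $\Gamma$ is a half-arc-transitive vertex-primitive graph of valency $12$, and distinct primes give graphs of distinct orders $(p^3-p)/120$, hence pairwise non-isomorphic graphs.

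The main obstacle is establishing the non-self-pairing of the length-$6$ suborbit. Self-pairing would require an element of $G$ interchanging the two copies $M$ and $M^g$ of $\Alt(5)$ whose intersection is the given $\D_5$; any such element necessarily normalises $\D_5=M\cap M^g$, so the question reduces to analysing $\N_G(\D_5)$ and deciding whether the required swapping element lies in $\PSL(2,p)$ or only in $\PGL(2,p)\setminus\PSL(2,p)$. This is an index-$6$ analogue of the computation recorded in the fourth column of Table~\ref{table:maxA5} for the index-$5$ subgroup $H\cong\Alt(4)$, and I expect it to be the delicate point: one must pin down the normaliser of the dihedral group $\D_5$ inside $\PSL(2,p)$ and $\PGL(2,p)$ (a dihedral subgroup of a maximal-torus normaliser) and track its reflection classes carefully, so as to exhibit a congruence class of primes for which no swapping element exists in $G$.
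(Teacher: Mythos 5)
Your construction cannot work, because the non-self-paired suborbit of length $6$ that it requires does not exist in the action of $G=\PSL(2,p)$ on the cosets of $M\cong\Alt(5)$. Apply Lemma~\ref{lemma:selfnormalising} with $d=6$ and $H\cong\D_5$ (the unique conjugacy class of index-$6$ subgroups of $\Alt(5)$; they are maximal and self-normalising, so the lemma applies): suborbits of length $6$ correspond to non-identity elements of $\N_G(H)/H$, and such a suborbit is non-self-paired precisely when the corresponding element has order greater than $2$. But $\ZZ_5$ is characteristic in $H$, so $\N_G(H)\leq \N_G(\ZZ_5)$, which by Dickson's classification is a dihedral group of order $p\pm 1$ (the normaliser of the maximal torus containing the $\ZZ_5$); and inside a dihedral group the normaliser of a dihedral subgroup of order $10$ contains it with index at most $2$. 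Hence $|\N_G(H)/H|\leq 2$, every non-identity element of $\N_G(H)/H$ has order exactly $2$, and by Lemma~\ref{lemma:selfnormalising}(2) every length-$6$ suborbit of this action is self-paired (or there is no length-$6$ suborbit at all, when $\N_G(H)=H$). The orbital graph you would build therefore has valency $6$, never $12$, for every admissible prime $p$. The point you flag as ``delicate'' --- exhibiting a congruence class of primes with no swapping element in $\PSL(2,p)$ --- in fact fails for all $p$: the swapping element, when the suborbit exists, always lies in $\PSL(2,p)$ because it can be taken inside the dihedral normaliser of the torus. Passing to $\PGL(2,p)$ does not help, as it only doubles the relevant normaliser.

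The paper's proof avoids this obstruction by changing the group: it takes $G=\PSp(6,p)$ with $p\equiv 7,23\pmod{40}$, $M\cong\Sym(5)$ maximal, and $H\cong\AGL(1,5)$ of index $6$ in $M$. Lemma~\ref{subdeg6} shows that there $\N_G(H)/H\cong\ZZ_{p+1}$, a cyclic group with many elements of order greater than $2$, so Lemma~\ref{lemma:selfnormalising} produces an asymmetric $G$-arc-transitive digraph of out-valency $6$ whose underlying graph has valency $12$; the full automorphism group is then identified using \cite{LPS} together with the self-normalisation of $M$ in $\PGSp(6,p)$. Any repair of your argument must likewise find a point stabiliser whose index-$6$ subgroup has normaliser quotient containing an element of order at least $3$, and the rank-one groups $\PSL(2,p)$ do not provide one.
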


It is easy to see that a half-arc-transitive graph must have even valency. In \cite{LiLuMar}, it was proved that there is no vertex-primitive example of valency at most $8$. The two results above thus imply that $12$ is the smallest valency for a half-arc-transitive vertex-primitive graph, solving~\cite[Problem~1.3]{LiLuMar}.

After some preliminaries in Section~\ref{sec:prelim}, we prove Theorem~\ref{theo:mainOrbital} in Section~\ref{sec:firstmain} and Theorem~\ref{theo:mainGraph} in Section~\ref{sec:secondmain}. These proofs are conditional on the proof of Theorem~\ref{theo:max} which, being slightly more technical, is delayed until Section~\ref{sec:thirdmain}. We then prove Corollary~\ref{cor:halfarc} in Section~\ref{sec:HAT} and Corollary~\ref{cor:halfarc2} in Section~\ref{sec:HAT2}. Before moving on to these proofs, we correct a few mistakes in the literature on this subject that have, as far as we know, gone undetected until now.

\subsection{Corrections}
\begin{itemize}

\item In~\cite[Theorem~1.2]{Li}, it is mistakenly claimed that there exists an infinite family of $5$-valent vertex-primitive $4$-arc-transitive graphs. In fact, as can be inferred from Table~\ref{table:main}, there is a unique such graph and it has order $17442$.

\item In~\cite[Table~2]{LiLuMar}, in the first row, one should have $p\equiv 1\pmod{4}$. In the third row, the condition ``$p \equiv \pm 1 \pmod{8}$, $p\neq 7$'' should be replaced by ``$p \equiv \pm 1 \pmod{24}$''. In the fifth row, one should have $p\neq 3$. Finally, in the last row, $\Aut(\Gamma)$ should be $\PSL(3,7).2$ and the vertex-stabiliser should be $\Sym(4)\times\Sym(3)$. (See also next item.)

\item In~\cite[Table~3]{LiLuMar}, the case when $G=\PSL(3,7).\langle\sigma\rangle$ where $\sigma$ is a graph automorphism is missing. (This can be traced back to a typographical error in~\cite[Theorem~1.4(6)]{Wang} where it should read $\PSL(3,7).2$ instead of $\PSL(3,7).3$. Note that the correct version appears in Theorem~1.3(2) of the same paper.) 

\item In the main theorem of~\cite{WangA5}, the case $\soc(G)=\J_3$ is missing. (Indeed, this example was already known to Weiss~\cite{Weiss}.)
\end{itemize}

\begin{remark}
As the above list reminds us, it is easy for a mistake to slip in with these kinds of results. This is partly because of the nature of the proofs. To increase our confidence in the correctness of our results, we have checked them against databases of known examples whenever possible. More specifically, we have checked Tables~\ref{table:sporadic}--\ref{table:main} against the database of primitive groups of degree less than $4\,096$~\cite{Colva}, and  Tables~\ref{table:maxA5} and~\ref{table:maxS5} against the database of almost simple groups of order at most $16\,000\,000$ implemented in~\magma~\cite{magma}.
\end{remark}

\section{Preliminaries}\label{sec:prelim}

In this section, as well as in Sections~\ref{sec:HAT} and~\ref{sec:HAT2}, we will need the notion of a digraph. Since this terminology has many usages, we formalise ours here. A \emph{digraph} $\Gamma$ is a pair $(V,A)$ where $V$ is a set and $A$ is a binary relation on $V$. The set $V$ is called the \emph{vertex-set} of $\Gamma$ and its elements are the \emph{vertices}, while $A$ is the \emph{arc-set} and its elements \emph{arcs}. If $A$ is a symmetric relation, then $\Gamma$ is called a \emph{graph}. 

If $(u,v)\in A$, then $v$ is an \emph{out-neighbour} of $u$ and $u$ is an \emph{in-neighbour} of $v$. The number of out-neighbours of $v$ is its \emph{out-valency}. If this does not depend on the choice of $v$, then it is the \emph{out-valency} of $\Gamma$. An \emph{automorphism} of $\Gamma$ is a permutation of $V$ that preserves $A$. We say that $\Gamma$ is \emph{$G$-arc-transitive} if $G$ is a group of automorphisms of $\Gamma$ that acts transitively on  $A$.

The following easy lemma will prove useful. Here, for a (not necessarily normal) subgroup $H$ of a group $G$, $G/H$ denotes the set of right $H$-cosets in $G$.

\begin{lemma}\label{lemma:selfnormalising}
Let $d\geq 2$ and let $G$ be a non-regular primitive permutation group such that $G_v$ has a unique conjugacy class of subgroups of index $d$, and these subgroups are maximal and self-normalising  in $G_v$. Let $H$ be a representative of this conjugacy class, and let $N=\N_G(H)$ be the normaliser of $H$ in $G$. The following hold.
\begin{enumerate}
\item There is a one-to-one correspondence between $G$-arc-transitive digraphs of out-valency $d$ and elements $Hg$ of $(N/H)\setminus \{H\}$.
\item Such a digraph is a graph if and only if $Hg$ has order $2$ in $N/H$.
\item $G_v$ has an orbit of length $d$ if and only if $(N/H)\setminus \{ H\}\neq \emptyset$ or, equivalently, $N> H$.
\end{enumerate}
\end{lemma}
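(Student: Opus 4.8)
The plan is to exploit the standard correspondence between arc-transitive digraphs and orbitals, and to translate the combinatorial data (out-neighbourhoods of a fixed vertex) into group-theoretic data about cosets and the normaliser $N=\N_G(H)$. Throughout, I fix the vertex $v$ and use that since $G$ is primitive and non-regular, $G_v$ is a maximal subgroup of $G$ with $G_v\neq 1$.

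First I would set up the correspondence in part (1). A $G$-arc-transitive digraph $\Gamma$ of out-valency $d$ is determined by the out-neighbourhood of $v$, which is a $G_v$-orbit on $V\setminus\{v\}=G/G_v$ minus the fixed coset; equivalently it is a non-trivial self-paired-or-not orbital of $G$, i.e. a $G$-orbit on $(G/G_v)\times(G/G_v)$ containing a pair $(v,w)$ with $w\neq v$. Such an orbital corresponds to a double coset $G_v g G_v$ with $g\notin G_v$, and the out-valency equals the number of $G_v$-cosets in this double coset, namely $|G_v:G_v\cap G_v^{g}|$. The key observation is that an orbit of $G_v$ of length $d$ on $V\setminus\{v\}$ is the same as a subgroup $K=G_v\cap G_w$ of index $d$ in $G_v$, where $w$ is a point in the orbit; by the hypothesis that $G_v$ has a \emph{unique} conjugacy class of index-$d$ subgroups, all such stabilisers $K$ are $G_v$-conjugate to the fixed representative $H$. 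Replacing $w$ within its $G_v$-orbit, I may assume $G_v\cap G_w=H$, so $H=G_v\cap G_w$ fixes both $v$ and $w$. I would then show that $w$ is determined up to the action of $\N_{G}(H)$: the vertices $w'$ with $G_v\cap G_{w'}=H$ are exactly those in the orbit, and I would identify these with the nontrivial cosets of $H$ in $N$. Concretely, since $H$ is self-normalising in $G_v$, a digraph is specified by choosing $w$, equivalently by choosing the coset $Hg$ where $g\in G$ maps $v$ to $w$; two choices give the same digraph iff they differ by an element normalising the relevant structure, which forces $g\in N$, yielding the bijection with $(N/H)\setminus\{H\}$.

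Next, for part (2), I would use that a digraph is a graph exactly when its arc-set is symmetric, i.e. when the orbital is self-paired: $(v,w)$ and $(w,v)$ lie in the same $G$-orbit. In the coset language, if $w=v^g$ then the pair $(w,v)$ is carried to $(v,w)$ by some $h\in G$ with $v^h=w$ and $w^h=v$; such an $h$ must normalise $H=G_v\cap G_w$ (since it swaps $v$ and $w$ it preserves the common stabiliser) and must invert the coset, giving $Hg$ of order $2$ in $N/H$. Conversely an involution in $N/H$ produces the swapping element and hence symmetry. This is essentially a bookkeeping argument once the correspondence in (1) is in place.

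Finally, part (3) is an immediate corollary: $G_v$ has an orbit of length $d$ precisely when some $G$-arc-transitive digraph of out-valency $d$ exists, which by (1) happens iff $(N/H)\setminus\{H\}\neq\emptyset$, i.e. iff $N>H$. I expect the main obstacle to be proving that the correspondence in part~(1) is genuinely a \emph{bijection}: showing it is well-defined and surjective is routine, but verifying injectivity requires carefully using both that the index-$d$ subgroups of $G_v$ form a single conjugacy class and that $H$ is self-normalising in $G_v$, to rule out that two distinct nontrivial cosets $Hg_1,Hg_2$ yield isomorphic-as-labelled digraphs with the same out-neighbourhood of $v$. The self-normalising hypothesis is what pins down the coset representative uniquely and prevents collapsing; I would make sure to isolate exactly where each hypothesis ($d\geq 2$, non-regularity, uniqueness of the class, maximality, self-normalising) is used.
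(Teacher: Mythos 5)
Your overall strategy is the same as the paper's: parametrise the $G$-arc-transitive digraphs of out-valency $d$ by a canonical out-neighbour $w$ of $v$ with $G_{vw}=H$ and then by a coset of $H$ in $N=\N_G(H)$, using the unique conjugacy class of index-$d$ subgroups to find such a $w$, self-normalisation to make it unique in its $G_v$-orbit, and maximality of $H$ together with non-regularity of $G$ on the way back. However, there is a genuine gap at the central step, and it is not the one you flag. Having fixed $w$ with $G_{vw}=H$, you pass to ``the coset $Hg$ where $g\in G$ maps $v$ to $w$'' and assert that this ``forces $g\in N$''. It does not: the elements mapping $v$ to $w$ form a full coset of $G_v$, of which at most a single $H$-coset lies in $N$, and the existence of even one element of $N$ taking $v$ to $w$ is precisely the non-trivial content of part (1) (equivalently, of the forward implication in part (3), which is the direction the paper actually uses). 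An arbitrary $g$ with $v^g=w$ conjugates $G_v$ to $G_w$ but has no reason whatsoever to normalise $H=G_v\cap G_w$.

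The paper closes exactly this gap by bringing in the in-neighbours: arc-transitivity makes $G_v$ transitive on the $d$ in-neighbours of $v$, so by the unique-conjugacy-class hypothesis some in-neighbour $u$ satisfies $G_{uv}=H$, and the element $g$ with $(u,v)^g=(v,w)$ then satisfies $H^g=G_{uv}^g=G_{vw}=H$ automatically, hence lies in $N$. (An alternative repair: take any $t$ with $v^t=w$; then $H$ and $H^t$ are both index-$d$ subgroups of $G_w$, hence $G_w$-conjugate, so $H^{tc}=H$ for some $c\in G_w$ and $g=tc$ works.) You have also mislocated the difficulty: in your parametrisation, injectivity is the routine part, since two elements of $N$ sending $v$ to the same point differ by an element of $N\cap G_v=\N_{G_v}(H)=H$, whereas it is the well-definedness/surjectivity side that needs the existence argument above together with the maximality of $H$ and the non-regularity of $G$ (to guarantee that $G_{v,v^g}$ equals $H$ rather than something strictly larger, so that the out-valency really is $d$). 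Parts (2) and (3) are fine once (1) is fully in place.
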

\begin{proof}
We prove the three claims in order.
\begin{enumerate}
\item \label{one} Let $\Gamma$ be a $G$-arc-transitive digraph of out-valency $d$.  Since $\Gamma$ is $G$-arc-transitive, $\Gamma$ also has in-valency $d$. Also $G_v$ is transitive on the $d$ in-neighbours of $v$ and hence the stabilisers of these in-neighbours form a conjugacy class of subgroups of $G_v$ of index $d$. As there is a unique such conjugacy class and $H$ is contained in it, we have $H=G_{uv}$ for some in-neighbour $u$ of $v$. 
Since $H$ is a self-normalising proper subgroup of $G_v$, it follows that $u$ is the unique in-neighbour of $v$ fixed by $H$ (this is easily proved, and, for example, is set as an exercise in \cite[Exercise 1.6.3]{DM}).
%Since $H$ is maximal in $G_v$ it follows that $G_v$ acts primitively on the set of in-neighbours of $v$ (\cite[Corollary 1.5A]{DM}), and since $H$ is not normal in $G_v$ this $G_v$-action is not regular. Hence, by \cite[Corollary 2.2]{BDNP}, $u$ is the unique in-neighbour of $v$ fixed by $H$. 
The same argument on out-neighbours shows that $H=G_{vw}$ for a unique out-neighbour $w$ of $v$. Since $\Gamma$ is $G$-arc-transitive, there exists a unique coset $Hg\in G/H$ such that $(u,v)^g=(v,w)$.  Note that $H^g=G_{uv}^g=G_{vw}=H$, and hence $Hg\in N/H$. Also $u\neq v$, and hence $Hg\ne H$. Thus $\varphi:\Gamma\rightarrow Hg$ is a well-defined map from $G$-arc-transitive digraphs of out-valency $d$ to  $(N/H)\setminus \{ H\}$. We show that $\varphi$ is a bijection.

To show that $\varphi$ is onto, let $Hg\in(N/H)\setminus \{ H\}$, and let $w=v^g$. Since $g\notin G_v$, $w\neq v$. Let $\Gamma$ be the $G$-arc-transitive digraph with arc-set $(v,w)^G$. We have $H=H^g\leq (G_v)^g=G_w$ and thus $H\leq G_{vw}$. Since $G$ is primitive but not regular, we have  $G_{vw}< G_v$. As $H$ is maximal in $G_{v}$, it follows that  $H=G_{vw}$, and hence $\Gamma$ has out-valency $d$. Finally, as in the previous paragraph, $G_{vw}=H = G_{uv}$ for some unique in-neighbour $u$ of $v$, and we have $(u,v)^g=(v,w)$ so $\varphi(\Gamma)=Hg$.

To show that $\varphi$ is one-to-one, suppose that $\Gamma =(V,A)$ and $\Delta =(V, B)$ are $G$-arc-transitive digraphs of out-valency $d$ with images $\varphi(\Gamma)=Hg$ and
 $\varphi(\Delta)=Hk$ such that $Hg=Hk$. By the first paragraph of the proof, $H = G_{uv}$ where $A=(u,v)^G$, $u^g=v$ and $g\in N\setminus G_v$, and also $H=G_{xv}$, where   $B=(x,v)^G$, $x^k=v$ and $k\in N\setminus G_v$. Since $Hg=Hk$, we have $k=hg$ for some $h\in H$, and so $u^g = v = x^k = x^{hg} = x^g$. Hence $u=x$ which implies that $A=B$ and $\Gamma = \Delta$.  

\item \label{two} Suppose that $\Gamma$ is a $G$-arc-transitive graph of valency $d$. Adopting the notation from the first paragraph of the proof of~(\ref{one}), we can choose $u$ and $w$ such that $u=w$. Hence $(u,v)^g=(v,u)$ and $g^2\in G_{uv}=H$. In other words, $Hg$ has order $2$.

Conversely, if $Hg$ is an element of order $2$ in $(N/H)\setminus \{ H\}$ then, adopting the notation from the second paragraph of the proof of~(\ref{one}), we have that $g^2\in H=G_{vw}$. Hence $w^g=v^{g^2}=v$ and $(w,v)=(v,w)^g$, so $\Gamma$ is a graph.

\item \label{three} Suppose that $G_v$ has an orbit of length $d$. Let $w$ be an element of that orbit, and let $\Gamma$ be the digraph with arc-set $(v,w)^G$. Clearly, $\Gamma$ is $G$-arc-transitive and has out-valency $d$. Hence, by~(\ref{one}), $(N/H)\setminus \{ H\}\neq \emptyset$.

Conversely, if $(N/H)\setminus \{ H\}\neq \emptyset$, then, by~(\ref{one}), there exists a $G$-arc-transitive digraph of out-valency $d$ and thus $G_v$ has an orbit of length~$d$. \qedhere
\end{enumerate}
\end{proof}

\subsection{Brauer characters}

We will often use  the Brauer character tables of $\Sym(n)$, $\Alt(n)$ and their double covers for $n=4$ or $5$. For an algebraically closed field $F$ of characteristic $p$ and a group $G$, the Brauer character of a finite-dimensional $F$-representation  $\varphi$ of $G$  is a  function that maps each $p$-regular element $g$ of $G$ to the sum of lifts to $\mathbb{C}$ of  the eigenvalues of $\varphi(g)$ (see \cite{Isaacs} for definitions). The Brauer character of $\varphi$ is the sum of the Brauer characters of the irreducible constituents of $\varphi$, and two irreducible representations are equivalent precisely when their Brauer characters are equal \cite[Theorem 15.5]{Isaacs}, which occurs exactly when their characters are equal \cite[Corollary 9.22]{Isaacs}. The Brauer character table  describes the irreducible Brauer characters. If $p\nmid |G|
$, then the Brauer table is the same as the complex character table \cite[Theorem 15.13]{Isaacs} and, for the groups above, can be accessed in    \GAP~\cite{gap} or \magma~\cite{magma}, as well as the  Atlas~\cite{atlas} when $n=5$. Otherwise, the Brauer table can be accessed in \GAP, or the Brauer Atlas~\cite{modatlas} when $n=5$. See  \cite{modatlas} for details on how to read the tables, and  \cite[Section 4.2]{BHRD} for information about irrationalities.

The following theory will be used in conjunction with Brauer character tables. Let $G$ be a group and $F$ a field.  An irreducible $FG$-module $V$ is \textit{absolutely irreducible} if  the extension of scalars $V\otimes E$ is irreducible for every field extension $E$ of $F$. Note that $V$ is absolutely irreducible if and only if  $\End_{FG}(V)=F$ \cite[Lemma VII.2.2]{HB}, where $\End_{FG}(V)$ denotes the ring of $FG$-endomorphisms of $V$. The field $F$ is a \textit{splitting field} for $G$ if every irreducible $FG$-module is absolutely irreducible. For a character $\chi$ of an $FG$-module $V$ and a subfield $K$ of $F$, let $K(\chi)$ denote the subfield of $F$ generated by $K$ and the image of $\chi$.

Now suppose that $G$ is one of the groups above.  By the Brauer character tables of these groups and \cite[Theorem VII.2.6]{HB}, $F=\GF(q^2)$ is a splitting field for $G$ for any  prime power $q$, so every irreducible representation of $G$ over the algebraic closure of $F$ can be realised over $F$. Let $K=\GF(q)$. If $V$ is an irreducible $FG$-module with character $\chi$, then either  $K(\chi)=K$ and $V=U\otimes F$ for some absolutely irreducible $KG$-module $U$ \cite[Theorem VII.1.17]{HB}, or $K(\chi)=F$ and $V$ is an irreducible $KG$-module of dimension $2\dim_F(V)$ \cite[Theorem  VII.1.16]{HB}. Moreover, every irreducible $KG$-module arises in this way. Indeed, suppose that $V$ is an irreducible $KG$-module that is not absolutely irreducible,  and let $k:=\End_{KG}(V)$. Then $k$ is a finite field by Wedderburn's Theorem, and $V$ is an absolutely irreducible $kG$-module where  $k$-scalar multiplication is defined to be evaluation. Let $\chi$ be the character of $V$ as a $kG$-module. Then $k=K(\chi)$ by \cite[Theorem  VII.1.16]{HB}, and $K(\chi)\subseteq F$ by \cite[Theorem VII.2.6]{HB} (or the Brauer tables). Hence $k=F$ and $V$ is an irreducible $FG$-module, as desired. Further,  $V\otimes F$ (with $V$ as a $KG$-module) is a direct sum of two non-isomorphic irreducible $FG$-modules with the same dimension, one of which is $V$ as an $FG$-module  \cite[Lemma VII.1.15 and Theorem VII.1.16]{HB}.

\section{Proof of Theorem~\ref{theo:mainOrbital}}\label{sec:firstmain}
For the rest of this section, let $G$ be a primitive group, let $G_v$ be one of its point-stabilisers, let $\Delta$ be an orbit of $G_v$ of length $5$ and let $G_v^{\Delta}$ be the permutation group induced by the action of $G_v$ on $\Delta$.

We first report the  following result of Wang~\cite{WangSoluble5}. (Note that the case corresponding to Table~\ref{table:sporadic} (2) was mistakenly omitted in~\cite{WangSoluble5}.)

\begin{theorem}\label{theo:GvSoluble}
$G_v^{\Delta}$ is soluble if and only if $(G,G_v)$ appears in Table~\ref{table:sporadic} (1--6,9,10) or Table~\ref{table:infinite} (1--6).
\end{theorem}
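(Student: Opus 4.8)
The plan is to reduce the analysis of $G_v^\Delta$, a transitive permutation group of degree $5$, to the small list of possibilities given by the classification of transitive groups of degree $5$, and then use the hypothesis that $G_v^\Delta$ is soluble to cut this list down further. A transitive group of degree $5$ is one of $\ZZ_5$, $\D_5$, $\AGL(1,5)$, $\Alt(5)$ or $\Sym(5)$, and exactly the first three of these are soluble. Thus the solubility of $G_v^\Delta$ is equivalent to $G_v^\Delta \in \{\ZZ_5, \D_5, \AGL(1,5)\}$, and the content of the theorem is to determine precisely which primitive groups $G$ realise one of these three induced actions on a suborbit of length $5$.

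\emph{Forward direction.} Assuming $G_v^\Delta$ is soluble, I would split into the standard O'Nan--Scott dichotomy for the primitive group $G$: either $G$ has an abelian regular normal subgroup (the affine/HA type) or $G$ is almost simple, with the product-action, diagonal, and other types to be ruled out or folded in. In the affine case $G = \ZZ_p^d \rtimes G_0$ with $G_0 \leq \GL(d,p)$ and $G_v = G_0$; here a suborbit of length $5$ with soluble induced action forces $G_0$ to have an orbit of length $5$ on which it acts as $\ZZ_5$, $\D_5$ or $\AGL(1,5)$, and a representation-theoretic analysis (using the Brauer character material set up in the preliminaries, and the congruence conditions on $p$ governing whether $5 \mid p-1$, $p+1$, etc.) pins down $d \in \{1,2,4\}$ and produces exactly rows (1)--(6) of Table~\ref{table:infinite}. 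In the almost simple case, the key observation is that the kernel of the action $G_v \to G_v^\Delta$ together with maximality forces $G_v$ to be (closely related to) a maximal subgroup, and I would invoke Theorem~\ref{theo:max} to enumerate almost simple $G$ with maximal $\Alt(5)$ or $\Sym(5)$; the soluble-induced-action constraint then selects those rows whose $\N_G(H)/H$ data is consistent with $G_v^\Delta$ being $\ZZ_5$, $\D_5$ or $\AGL(1,5)$ rather than $\Alt(5)$ or $\Sym(5)$, yielding rows (1)--(6), (9), (10) of Table~\ref{table:sporadic}.

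\emph{Converse direction.} For each pair $(G,G_v)$ listed in the stated rows, I would verify directly that $G$ is primitive, that $G_v$ genuinely has a suborbit of length $5$, and that the induced action $G_v^\Delta$ is soluble. For the infinite families this is a uniform check: $G_v$ is $\ZZ_5$, $\D_5$ or $\AGL(1,5)$ acting on a natural set of size $5$, all of which are soluble, so the induced group is a quotient of a soluble group and hence soluble. For the sporadic rows the same holds since each $G_v$ induces one of these three degree-$5$ groups. The role of Lemma~\ref{lemma:selfnormalising} is to guarantee existence of such a suborbit via the condition $\N_G(H) > H$, where $H$ has index $5$ in $G_v$.

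The main obstacle will be the affine case analysis: determining exactly which dimensions $d$ and which primes $p$ admit a soluble subgroup of $\GL(d,p)$ with an orbit of length $5$ on nonzero vectors carrying one of the three soluble degree-$5$ actions. This requires tracking the precise modular representation theory of $\ZZ_5$, $\D_5$ and $\AGL(1,5)$ over $\GF(p)$ as $p$ varies through the residue classes modulo $5$ (and finer congruences), which is exactly where the Brauer-character and splitting-field apparatus of Section~\ref{sec:prelim} is needed to control when an irreducible module of the relevant dimension exists and is realisable over the prime field. Since Theorem~\ref{theo:GvSoluble} is quoted as a result of Wang, I would ultimately cite~\cite{WangSoluble5} for the bulk of this, noting only the corrected inclusion of Table~\ref{table:sporadic}~(2).
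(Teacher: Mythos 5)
The paper does not prove this statement at all: it is quoted verbatim as a result of Wang~\cite{WangSoluble5}, with only the remark that row (2) of Table~\ref{table:sporadic} was mistakenly omitted there. Since your bottom line is also to cite~\cite{WangSoluble5}, your proposal agrees with the paper in its ultimate disposition. However, the independent argument you sketch as a fallback has a genuine flaw in the almost simple case: you propose to invoke Theorem~\ref{theo:max}, but that theorem classifies almost simple groups with a \emph{maximal subgroup isomorphic to $\Alt(5)$ or $\Sym(5)$}, and in the soluble case none of the relevant point stabilisers are of that form --- the rows (1--6,9,10) of Table~\ref{table:sporadic} have $G_v$ equal to $\D_5$, $\D_{10}$, $\AGL(1,5)$ or $\AGL(1,5)\times\ZZ_2$. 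Theorem~\ref{theo:max} gives no information about which almost simple groups have such small soluble maximal subgroups (e.g.\ $\PGL(2,11)$ with $\D_{10}$, or $\Suz(8)$ with $\AGL(1,5)$), so this branch of your argument does not get off the ground.

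You also do not engage with what is actually hard in Wang's theorem, namely controlling the kernel $G_v^{[1]}$ of the action of $G_v$ on $\Delta$. Knowing $G_v^{\Delta}\in\{\ZZ_5,\D_5,\AGL(1,5)\}$ says nothing a priori about $|G_v|$; the substance of the Sims--Knapp--Wang line of argument is to bound this kernel and hence $|G_v|$, after which the classification of the possible pairs $(G,G_v)$ can proceed. Your sentence that maximality ``forces $G_v$ to be (closely related to) a maximal subgroup'' is true but vacuous here and does not substitute for that analysis. The affine part of your sketch is broadly sound in outline (and matches the spirit of the paper's own treatment of the insoluble affine case in Lemma~\ref{theo:affine}), but as a self-contained proof the proposal is incomplete; as a citation of~\cite{WangSoluble5} together with the noted correction, it coincides with what the paper does.
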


It thus remains to consider the case when  $G_v^{\Delta}$ is not soluble. Since it is a transitive permutation group of degree $5$, it must be isomorphic to either $\Alt(5)$ or $\Sym(5)$. We first consider the case when $G_v$ does not act faithfully on $\Delta$.

\begin{theorem}\label{theo:GvInsolubleUnfaithful}
$G_v^{\Delta}\in\{\Alt(5),\Sym(5)\}$ and $G_v$ does not act faithfully on $\Delta$ if and only if $(G,G_v)$ appears in Table~\ref{table:sporadic} (7,8,11,12).
\end{theorem}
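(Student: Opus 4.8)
The plan is to analyze the situation where $G_v$ acts on $\Delta$ (a suborbit of length $5$) with image $G_v^\Delta \in \{\Alt(5), \Sym(5)\}$ but with a nontrivial kernel $K = G_{v(\Delta)}$, the pointwise stabiliser of $\Delta$ in $G_v$. The key structural observation is that $K$ is a normal subgroup of $G_v$ fixing each of the five points in $\Delta$; in particular, $K$ fixes some point $w \in \Delta$, so $K \leq G_{vw}$, and one studies how $K$ sits inside the point-stabiliser. Because $G$ is primitive, $G_v$ is maximal in $G$, which severely constrains the possibilities. I would first record that $|G_v : K| \in \{60, 120\}$ and that $G_v^\Delta$ acts on $K$ by conjugation (since $K \trianglelefteq G_v$), giving $G_v$ the structure of an extension of $K$ by $\Alt(5)$ or $\Sym(5)$.

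The main idea is to exploit the interplay between the local action and the global primitivity. First I would consider the subgroup $G_{vw}$ for $w \in \Delta$: since $G_v^\Delta$ is $2$-transitive on $\Delta$, the orbital graph on $(v,w)^G$ is $G$-arc-transitive of valency $5$, and one analyses the amalgam $(G_v, G_{vw})$ or equivalently the action of $G_v$ on the neighbourhood. The nontriviality of the kernel $K$ means the action is not locally faithful, and I would invoke the theory of such locally $\Alt(5)$ or $\Sym(5)$ actions — this connects to Weiss-type results on bounded local structure and to the work of Wang~\cite{WangA5}. A cleaner route, given the tables, is to observe that since $G$ is primitive and almost simple or of affine/product type, one can pin down $\soc(G)$ directly; the faithful-versus-unfaithful dichotomy corresponds precisely to whether $G_v$ embeds into $\Sym(\Delta)$ or genuinely extends it. For the unfaithful case the candidates should reduce to a short list, matching the $\Alt(9)$, $\Sym(9)$, $\J_3$, and $\J_3 \rtimes \ZZ_2$ rows.

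The forward-only direction (that each listed pair actually has the stated property) is routine verification: for $\Alt(9)$ and $\Sym(9)$ with stabilisers $(\Alt(4)\times\Alt(5))\rtimes\ZZ_2$ and $\Sym(4)\times\Sym(5)$, the $\Alt(4)$ or $\Sym(4)$ factor acts trivially on a suborbit of length $5$ arising from the $\Sym(5)$ factor, exhibiting the nontrivial kernel explicitly; for $\J_3$ and $\J_3\rtimes\ZZ_2$ one checks the maximal subgroups $\AGL(2,4)$ and $\AGammaL(2,4)$ have a suborbit of length $5$ on which the $\ZZ_2^4$ translation part (or more) acts trivially, using known maximal subgroup data. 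The substantive direction is the classification, and I expect the main obstacle to be showing no other primitive groups arise: one must rule out all almost simple groups whose point-stabiliser is an unfaithful extension with $\Alt(5)$ or $\Sym(5)$ top, as well as affine and product-type possibilities. I would handle affine groups by noting the kernel would have to be trivial there (the stabiliser acts faithfully on the socle), dispatch product type by the structure of the socle factors, and reduce the almost simple case to a bounded-order computation combined with Theorem~\ref{theo:max} and the Brauer-character machinery set up in the preliminaries. The delicate point is ensuring completeness across all Lie-type families, where one leans on the maximal-subgroup classification and the observation that an unfaithful local action forces the kernel $K$ to be a specific $G_v^\Delta$-module, whose possible isomorphism types are constrained by the Brauer character tables of $\Alt(5)$ and $\Sym(5)$ recorded above.
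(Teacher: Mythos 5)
Your overall framing (study the kernel $K$ of $G_v$ on $\Delta$, use $2$-transitivity of the subconstituent, and reduce to a bounded local structure) points in the right direction, but the proposal has a genuine gap at its centre: you never produce the quantitative bound that makes the classification finite. Everything hinges on the fact that for a primitive group with an unfaithful $2$-transitive subconstituent of degree $5$, the order of $G_v$ divides $2^{14}\cdot 3^2\cdot 5$ (this is \cite[Theorem 5.2]{Knapp76}, and it is also what underlies \cite{WangA5}). Gesturing at ``Weiss-type results on bounded local structure'' does not substitute for this bound, and without it your proposed ``bounded-order computation'' across all almost simple families cannot even begin. Relatedly, your appeal to Theorem~\ref{theo:max} is misplaced: that theorem classifies maximal subgroups \emph{isomorphic to} $\Alt(5)$ or $\Sym(5)$, whereas in the unfaithful case $G_v$ is by hypothesis a strictly larger extension, so Theorem~\ref{theo:max} says nothing about it. Finally, your list of expected examples is read off from the target table rather than derived, which is circular for the completeness direction (and your forward verifications, while plausible, are only sketched).

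The paper's actual proof is much shorter and leans on prior work: the statement is essentially the main theorem of \cite{WangA5}, and what remains is (i) to close the one case Wang left open, namely $G$ the Monster or Baby Monster with $G_v$ a maximal $2$-local subgroup, which is done by combining the divisibility bound $|G_v|\mid 2^{14}\cdot 3^2\cdot 5$ with the known maximal subgroups of those two groups \cite{BrayWilson,Wilsonmax}; and (ii) to restore the $\J_3$ and $\J_3\rtimes\ZZ_2$ examples that Wang mistakenly excluded. If you want a self-contained proof along the lines you sketch, you would need to reprove Knapp's bound (or an equivalent control on the kernel $K$ as a $G_v^{\Delta}$-group) and then carry out the case analysis over all socle types with that bound in hand; as written, the proposal is a plan with the hardest step left as a placeholder.
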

\begin{proof}
This is essentially a result of Wang~\cite{WangA5}, except that the author left open the case when $G$ is isomorphic to one of the Monster or Baby Monster sporadic groups and $G_v$ is a maximal $2$-local subgroup of $G$. By \cite[Theorem 5.2]{Knapp76} or \cite{WangA5}, the order of $G_v$ divides $2^{14}\cdot3^2\cdot5$.
This is impossible for the Monster by \cite{BrayWilson}, and for the Baby Monster by \cite{Wilsonmax}. Moreover, while running some computations, we noticed that Wang  mistakenly excluded the cases corresponding to Table~\ref{table:sporadic} (11,12).
\end{proof}

By Theorems~\ref{theo:GvSoluble} and~\ref{theo:GvInsolubleUnfaithful}, it suffices to consider the case when $G_v\cong G_v^{\Delta}\in\{\Alt(5),\Sym(5)\}$. Since $\Alt(5)$ and $\Sym(5)$ are $2$-transitive, it follows by~\cite[Theorem A]{2AT} that either $G$ is almost simple, or it has a  unique minimal normal subgroup which is regular. We deal with the latter case in the next two results. (Recall that a primitive group is \emph{affine} if it has an elementary abelian regular normal subgroup.)

\begin{lemma}\label{theo:TW}
If $G_v\in\{\Alt(5),\Sym(5)\}$ and $G$ has a unique minimal normal subgroup which is regular, then $G$ is affine.
\end{lemma}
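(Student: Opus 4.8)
The plan is to invoke the O'Nan--Scott theorem to pin down the structure of $G$ and then eliminate the nonabelian case by a crude order estimate. Write the unique minimal normal subgroup as $N=T^k$, where $T$ is a simple group and $k\geq 1$; since $N$ is regular we have $G=N\rtimes G_v$. If $T$ is abelian then $N$ is elementary abelian and $G$ is affine by definition, so the entire content of the lemma is that $T$ cannot be nonabelian. I would therefore assume, for a contradiction, that $T$ is nonabelian.

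First I would identify the O'Nan--Scott type. A primitive group with a regular minimal normal subgroup is of type HA (affine), HS, HC, or TW (twisted wreath). The holomorphic types HS and HC each have exactly two minimal normal subgroups, so the hypothesis that $N$ is the \emph{unique} minimal normal subgroup rules them out; as $N$ is also nonabelian, $G$ must be of twisted wreath type. I would then record the structural consequences (see the standard treatments of the O'Nan--Scott theorem): the number of simple factors satisfies $k\geq 6$, the group $G_v$ permutes the $k$ factors of $N$ transitively by conjugation, and the stabiliser $L$ in $G_v$ of a single factor (of index $k$ in $G_v$) induces on that factor a group $X$ with $\mathrm{Inn}(T)\leq X\leq \Aut(T)$.

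The contradiction then comes from counting. Since $L$ surjects onto $X$ and $\mathrm{Inn}(T)\cong T$, we have $|L|\geq |X|\geq |T|$; and as $T$ is a nonabelian simple group, $|T|\geq |\Alt(5)|=60$. Hence
\[
|G_v| \;=\; k\,|L| \;\geq\; 6\cdot 60 \;=\; 360,
\]
which is impossible because $G_v\in\{\Alt(5),\Sym(5)\}$ forces $|G_v|\leq 120$. This contradiction shows that $T$ is abelian, so $N$ is elementary abelian and $G$ is affine.

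The step I expect to carry the real weight is the identification of the twisted wreath structure, and in particular the bound $k\geq 6$: this is precisely where the O'Nan--Scott classification does the work, and without it one is left having to exclude the small degenerate cases $k=1$ and $k=2$ by hand. Should a more self-contained argument be preferred, I would instead observe that the section property of $L$ forces $T$ to be a composition factor of $G_v$, whence $T\cong\Alt(5)$ and $|L|\geq 60$; the bound $|G_v|=k|L|\leq 120$ then gives $k\leq 2$, and the cases $k=1$ and $k=2$ must be eliminated directly by noting that they do not produce a primitive group with $N$ regular (a factor-stabiliser mapping isomorphically onto $\mathrm{Inn}(T)$ gives rise to diagonal type, whose socle is non-regular).
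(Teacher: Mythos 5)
Your proposal is correct and follows essentially the same route as the paper: the paper also reduces to the nonabelian (twisted wreath) case, cites the standard structure theorem (\cite[Theorem 4.7B]{DM}) for the facts that $m\geq 6$, that $G_v$ acts transitively on the simple factors, and that the factor-normaliser in $G_v$ has $T$ as a composition factor, and then derives the same numerical contradiction with $|G_v|\leq 120$. The only cosmetic difference is that the paper phrases the contradiction as $m=|G_v:\N_{G_v}(T_1)|\leq 2$ versus $m\geq 6$, rather than $|G_v|\geq 360$.
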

\begin{proof}
Let $N$ be the unique minimal normal subgroup of $G$. If $N$ is abelian, then $G$ is affine. We thus assume that $N$ is non-abelian and hence $N=T^m$ for some non-abelian simple group $T$. Write $N=T_1\times\cdots \times T_m$ and let $X=\N_{G_v}(T_1)$.

By \cite[Theorem 4.7B]{DM}, $m\geq 6$,  the action by conjugation of $G_v$ on $\{T_1,\ldots,T_m\}$ is faithful and transitive, and $X$ has a composition factor isomorphic to $T$. The only non-abelian composition factor of $G_v$ is $\Alt(5)$ and thus $m=|G_v:X|\leq 2$, which is a contradiction. 
\end{proof}

\begin{lemma}\label{theo:affine}
$G$ is affine and $G_v\in\{\Alt(5),\Sym(5)\}$  if and only if $(G,G_v)$ appears in Table~\ref{table:infinite} (7,8).
\end{lemma}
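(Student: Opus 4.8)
The plan is to classify the affine primitive groups $G$ with point-stabiliser $G_v\cong\Alt(5)$ or $\Sym(5)$. Since $G$ is affine, it has an elementary abelian regular normal subgroup $N=\ZZ_p^n$ for some prime $p$, and $G=N\rtimes G_v$ with $G_v$ acting faithfully and irreducibly on $V:=N$ viewed as an $n$-dimensional $\GF(p)$-vector space; indeed faithfulness follows because $G_v$ is a point-stabiliser, and irreducibility is equivalent to the primitivity of $G$ by the standard correspondence for affine primitive groups. So the task reduces to determining, for each prime $p$, the faithful irreducible $\GF(p)$-representations $V$ of $\Alt(5)$ and $\Sym(5)$ for which the resulting group has a suborbit of length $5$.

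The key point is to pin down which representations actually yield a suborbit of length $5$, and then to identify $n$ and the congruence conditions on $p$. First I would observe that $G_v$ already acts as $\Alt(5)$ or $\Sym(5)$ on the orbit $\Delta$ of length $5$, and since $G_v\cong G_v^\Delta$ is acting faithfully, $\Delta$ is (up to the bijection $N\cong V$) an orbit of $G_v$ on the nonzero vectors of $V$ of length exactly $5$. Thus I need a nonzero vector $w\in V$ whose $G_v$-orbit has size $5$, equivalently whose stabiliser $(G_v)_w$ has index $5$ in $G_v$. For $G_v=\Alt(5)$ the index-$5$ subgroups are the copies of $\Alt(4)$, and for $G_v=\Sym(5)$ they are the copies of $\Sym(4)$; in both cases these are exactly the point-stabilisers in the natural degree-$5$ action. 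So the existence of a length-$5$ suborbit is equivalent to the existence of a nonzero vector fixed by a subgroup $H\cong\Alt(4)$ (respectively $\Sym(4)$), with stabiliser exactly $H$. This is precisely the setup of Lemma~\ref{lemma:selfnormalising} with $d=5$, noting that $\Alt(4)$ is self-normalising in $\Alt(5)$ and $\Sym(4)$ is self-normalising in $\Sym(5)$, each forming a single conjugacy class; the existence of the suborbit is then governed by the fixed-point space $C_V(H)$ being nonzero.

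The main computation is therefore to determine $\dim C_V(H)$ across the faithful irreducible $\GF(p)$-modules $V$, which is where the Brauer-character machinery set up in the preliminaries does the work. The standard fact is $\dim_{\GF(p)} C_V(H)=\langle \chi|_H,\mathbf 1_H\rangle$ computed via Brauer characters when $V$ is absolutely irreducible, with the field-of-definition subtleties (absolutely irreducible over $\GF(p)$ versus requiring $\GF(p^2)$) handled exactly as in the preliminary discussion following the Brauer-character tables. I would run through the irreducible Brauer characters of $\Alt(5)$ and $\Sym(5)$ in each characteristic $p$, discarding the non-faithful ones (the trivial and, for $\Sym(5)$, the sign character, together with any arising from the quotient $\Sym(5)\to\ZZ_2$), and for each faithful one compute the multiplicity of the trivial character of $H=\Alt(4)$ or $\Sym(4)$ in the restriction. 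The expectation, matching Table~\ref{table:infinite} (7,8), is that the unique faithful irreducible module contributing a nonzero fixed space is a $4$-dimensional one (the deleted permutation module, i.e. the nontrivial constituent of the natural degree-$5$ permutation module), valid for all $p\neq 5$; when $p=5$ the module structure degenerates and no suitable faithful irreducible $4$-dimensional module with the required fixed space survives, explaining the exclusion $p\neq 5$.

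The hard part will be the bookkeeping across the different characteristics: one must treat $p=2,3$ and the generic $p\geq 7$ cases separately, since in small characteristic the modular irreducibles of $\Alt(5)$ and $\Sym(5)$ differ from the ordinary ones, and one must verify that no other faithful irreducible (for instance a $2$-, $3$-, or $6$-dimensional module in the relevant characteristics) accidentally produces a second length-$5$ suborbit or fails faithfulness in a way that changes the count. I would also need to check that the $4$-dimensional module is genuinely irreducible and faithful for every $p\neq 5$ over $\GF(p)$ (as opposed to only over $\GF(p^2)$), using the splitting-field remarks in the preliminaries; I expect it to be realisable over $\GF(p)$ in all these cases, giving a single group for each $p\neq 5$ and hence exactly rows (7) and (8) of Table~\ref{table:infinite}, with row (7) for $G_v=\Alt(5)$ and row (8) for $G_v=\Sym(5)$. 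Finally, I would confirm that the stabiliser of the length-$5$ orbit vector is self-normalising of index $5$ so that the orbit length is exactly $5$ and not a proper multiple, completing the equivalence.
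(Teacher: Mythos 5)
Your proposal is correct and follows essentially the same route as the paper: both reduce the existence of a length-$5$ suborbit to the condition $\C_V(H)\neq 0$ for an index-$5$ subgroup $H$ via Lemma~\ref{lemma:selfnormalising}(3), and then classify the faithful irreducible $\GF(p)G_v$-modules whose restriction to $H$ contains the trivial module, arriving at the $4$-dimensional deleted permutation module for $p\neq 5$. The only difference is in execution: for $G_v=\Sym(5)$ with $p>5$ the paper invokes Specht module theory (James's branching theorem) rather than a direct Brauer character computation, and it disposes of $p\le 5$ by a \magma\ check, but these are interchangeable with the uniform Brauer-character bookkeeping you describe.
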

\begin{proof}
We assume that $G$ is affine and $G_v\in\{\Alt(5),\Sym(5)\}$. By definition, $G$ has an elementary abelian regular normal subgroup $V$, with $V\cong\ZZ_p^d$. Note that $G=V\rtimes G_v$. We view $V$ as a faithful irreducible $\GF(p)G_v$-module. 

Let $H$ be a subgroup of index $5$ in $G_v$ and let $\C_V(H)$ be the centraliser of $H$ in $V$. Since $H$ is self-normalising in $G_v$, it follows that $\N_G(H)=\C_V(H)\rtimes H$. Lemma~\ref{lemma:selfnormalising}(3) implies that $\C_V(H)\neq 0$, so the trivial $\GF(p)H$-module is a submodule of $V$.

Suppose that $p>5$ and $G_v=\Sym(5)$. In this case, $V$ is isomorphic to a Specht module $S^\mu$ for some partition $\mu$ of $5$. Since the trivial $\GF(p)H$-module is a submodule of $V$, \cite[Theorem 9.3]{James} implies that  we can remove an element from one of the parts of $\mu$ and obtain the partition $(4)$. If $\mu=(5)$, then $V$ is the trivial module, a contradiction. Hence $\mu=(4,1)$, in which case $d=4$ and $(G,G_v)$ appears in Table~\ref{table:infinite}~(8), and conversely, the pair $(G,G_v)$ has the required properties.

Suppose that $p>5$ and $G_v=\Alt(5)$. In particular, $H=\Alt(4)$. Using the Brauer character tables of $\Alt(4)$ and $\Alt(5)$, we determine that  $d=4$. Hence  $(G,G_v)$ appears in Table~\ref{table:infinite}~(7), and conversely, the pair $(G,G_v)$ has the required properties.

Finally, suppose that $p\leq 5$. Using \magma, we determine that $p\leq 3$ and $V$ is the deleted permutation module. Hence $d=4$ and $(G,G_v)$ appears in Table~\ref{table:infinite}~(7,8), and conversely, the pairs $(G,G_v)$ are examples.
\end{proof}

By Lemmas~\ref{theo:TW} and~\ref{theo:affine} and the remark preceding them, we may now assume that $G$ is an almost simple group. In particular, by Theorem~\ref{theo:max}, the possible groups $G$ appear in Table~\ref{table:maxA5} or~\ref{table:maxS5}. In view of Lemma~\ref{lemma:selfnormalising}~(\ref{three}), Theorem~\ref{theo:mainOrbital} now follows by going through these tables and ignoring the rows with $\N_G(H)/H=1$. (Row (1) of Table~\ref{table:maxA5} must also be ignored as $M$ is not core-free in $G$ in this case.)

\section{Proof of Theorem~\ref{theo:mainGraph}}\label{sec:secondmain}

Let $\Gamma$ be a $5$-valent vertex-primitive graph and let $G=\Aut(\Gamma)$. We first show that $\Gamma$ is $G$-arc-transitive. Suppose, on the contrary, that $\Gamma$ is not $G$-arc-transitive and thus $G_v^{\Gamma(v)}$ is intransitive. If $G_v^{\Gamma(v)}$ has a fixed point then, since $G$ is primitive, it is regular and cyclic of prime order at least $7$. However, a non-trivial regular abelian group $G$ of odd order cannot be the full automorphism group of a graph since the permutation sending each element to its inverse is a nontrivial automorphism with a fixed point.  Thus  $G_v^{\Gamma(v)}$ has two orbits, one of length $2$ and one of length $3$. Having an orbit of length $2$ implies that $G_v$ is a $2$-group, contradicting the fact that $G_v$ has an orbit of length $3$. This concludes the proof that $\Gamma$ is $G$-arc-transitive. In particular, $G_v$ has an orbit of length $5$, and hence, by Theorem~\ref{theo:mainOrbital}, $(G,G_v)$ appears in Table~\ref{table:sporadic} or~\ref{table:infinite}. It follows that $G$ is either affine or almost simple.

If $G$ is of affine type, it has a regular elementary abelian subgroup $R$ and $\Gamma$ is a Cayley graph on $R$, with connection set $S$, say. Recall that $S$ generates $R$ and that $|S|=5$. Since $S$ is inverse-closed, this implies that $R\cong\ZZ_2^a$ for some $a\leq 5$ and thus $|G|\leq 32$. It is then easy to check that $\Gamma$  appears in Table~\ref{table:main}~(1) and, conversely, that the graph in Table~\ref{table:main}~(1) does exist and has the required properties.

We may now assume that $G$ is almost simple. If $G_v$ is not isomorphic to $\Alt(5)$ or $\Sym(5)$ then, by Tables~\ref{table:sporadic} and~\ref{table:infinite}, there are only finitely many possibilities for $\Gamma$ (in fact, it has order at most $17442$) and we can deal with them on a case-by-case basis, by computer if necessary. We obtain the graphs in Table~\ref{table:main} rows (2-4) and (6). We may therefore assume that $G_v$ is isomorphic to $\Alt(5)$ or $\Sym(5)$. In particular, $G$ appears in Table~\ref{table:maxA5} or~\ref{table:maxS5}.  Note that, in these tables, $\N_G(H)/H$ always has at most one element of order $2$. By Lemma~\ref{lemma:selfnormalising}~(\ref{two}), it follows that $|\N_G(H)/H|$ is even and that $\Gamma$ is uniquely determined by $G$ and $G_v$. Note that the number of choices for $G_v$ for a given $G$ corresponds to the number of conjugacy classes of maximal $\Alt(5)$ or $\Sym(5)$ in $G$, which is listed in the third column of Tables~\ref{table:maxA5} and~\ref{table:maxS5}, respectively. It can be checked that, in the cases where there are multiple conjugacy classes, the classes are fused by an outer automorphism of $G$ and hence the different conjugacy classes give rise to isomorphic graphs. Finally, note that the groups appearing in rows (5), (9) and (10) of Table~\ref{table:maxA5} are subgroups of the ones appearing in rows (7) and (14) of Table~\ref{table:maxS5}. In particular, the former can be ignored as $G$ will not be the (full) automorphism group of $\Gamma$ in these cases. Finally, the groups in row (4) of Table~\ref{table:maxA5} and rows (5, 7, 12, 14) of Table~\ref{table:maxS5} lead to the graphs in rows (8, 7, 9, 10, 11) of Table~\ref{table:main}. (Row (1) of Table~\ref{table:maxA5} must also be ignored for the same reason as in the last section.)

\section{Proof of Theorem~\ref{theo:max}}\label{sec:thirdmain}

Throughout this section, let $G$ be an almost simple group with socle $T$, let $M$ be a maximal subgroup of $G$ isomorphic to $ \Alt(5)$ or $\Sym(5)$, and let $H$ be a subgroup of index $5$ in $M$. We prove Theorem~\ref{theo:max} via a sequence of lemmas.

\begin{lemma}
 Theorem~\ref{theo:max} holds if $T$ is an alternating group.
\end{lemma}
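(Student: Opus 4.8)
The plan is to determine, for each almost simple group $G$ whose socle $T$ is an alternating group $\Alt(n)$, exactly when $G$ has a maximal subgroup $M$ isomorphic to $\Alt(5)$ or $\Sym(5)$, and to match these against rows of Tables~\ref{table:maxA5} and~\ref{table:maxS5}. Since $G$ is almost simple with socle $\Alt(n)$, we have $\Alt(n)\leq G\leq\Sym(n)$, except for the exceptional outer automorphisms of $\Alt(6)$ which must be handled separately. The first step is to dispose of small $n$ by hand or by direct computation: for $n\leq 7$ or so, the maximal subgroups of $\Sym(n)$ and $\Alt(n)$ are completely known, and one simply reads off the occurrences of $\Alt(5)$ and $\Sym(5)$ as maximal subgroups. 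This should recover row (1) of Table~\ref{table:maxA5} (namely $G=\Sym(5)$ with $M=\Alt(5)$ maximal) and row (1) of Table~\ref{table:maxS5} (namely $G=\Alt(7)$ with $M=\Sym(5)$ maximal, the latter being a classical transitive embedding of $\Sym(5)$ into $\Alt(7)$).

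For the bulk of the argument, I would assume $n\geq 8$ and argue that no further examples arise. The key tool is the O'Nan--Scott/Liebeck--Praeger--Saxl description of maximal subgroups of $\Sym(n)$ and $\Alt(n)$: any maximal subgroup is either intransitive, imprimitive, or primitive (of affine, diagonal, product, or almost simple type), and its order is constrained accordingly. Since $|M|\leq 120$, I would use order and structure bounds to eliminate each type. An intransitive or imprimitive maximal subgroup of $\Sym(n)$ has the form $\Sym(k)\times\Sym(n-k)$ or $\Sym(k)\wr\Sym(n/k)$ and is far too large (and of the wrong isomorphism type) to be $\Alt(5)$ or $\Sym(5)$ once $n\geq 8$; more to the point, $\Alt(5)$ and $\Sym(5)$ can only act faithfully and \emph{primitively} on a set of size $n$ when $n\in\{5,6,10,15,\dots\}$, so a maximal $M\cong\Alt(5)$ or $\Sym(5)$ forces $M$ to be a primitive subgroup and $n$ to be one of these small degrees. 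This reduces the problem to checking the handful of primitive actions of $\Alt(5)$ and $\Sym(5)$ (for instance on $6$, $10$, or $15$ points) and testing maximality in the corresponding $\Alt(n)$ or $\Sym(n)$.

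I expect the main obstacle to be the careful verification of \emph{maximality} in the borderline primitive cases, rather than merely the existence of an abstract embedding. For example, $\Alt(5)\cong\PSL(2,5)\cong\PSL(2,4)$ has primitive actions of degree $5,6,10,\dots$, and one must check in each case whether the image is contained in a larger primitive or imprimitive subgroup of $\Alt(n)$ or $\Sym(n)$; almost always it is, which is why only the degree-$7$ example survives. The self-complementary embedding of $\Sym(5)$ in $\Alt(7)$ (via the action on the $7$ pairs, or equivalently the exceptional $2$-transitive action) is maximal, and this is the genuinely sharp case to confirm. Throughout, I would lean on the known lists of maximal subgroups for small-degree alternating and symmetric groups (and \magma{} for the finitely many remaining small cases) to be confident the enumeration is complete.

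Finally, for each surviving pair $(G,M)$ I must record the data demanded by the theorem statement: the number $c$ of conjugacy classes of such subgroups in $G$, and the structure of $\N_G(H)/H$ where $H$ has index $5$ in $M$. For $G=\Sym(5)$, $M=\Alt(5)$ we take $H=\Alt(4)$ and compute $\N_G(H)/H\cong\ZZ_2$, with $c=1$, matching row (1) of Table~\ref{table:maxA5}. For $G=\Alt(7)$, $M=\Sym(5)$ we take $H\cong\Sym(4)$ of index $5$ in $\Sym(5)$ and verify $\N_G(H)=H$ so that $\N_G(H)/H=1$, with $c=1$, matching row (1) of Table~\ref{table:maxS5}. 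These normaliser computations are routine once the embeddings are pinned down explicitly, and they complete the verification for alternating socles.
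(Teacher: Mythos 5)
Your overall strategy matches the paper's: handle small $n$ directly, then for large $n$ use the trichotomy intransitive/imprimitive/primitive together with the fact that $\Alt(5)$ and $\Sym(5)$ have no faithful primitive actions of large degree (the paper cuts at $n\geq 11$, since the largest such degree is $10$, not at $n\geq 8$). However, your stated conclusion is incomplete, and this is a genuine gap rather than a presentational one. The lemma is an if-and-only-if, and for alternating socle the correct list is \emph{four} groups, not two: besides $G=\Sym(5)$ (row (1) of Table~\ref{table:maxA5}) and $G=\Alt(7)$ (row (1) of Table~\ref{table:maxS5}), the group $\Alt(6)\cong\PSL(2,9)$ has two conjugacy classes of maximal $\Alt(5)$ and $\Sym(6)\cong\PSigmaL(2,9)$ has two classes of maximal $\Sym(5)$; these appear in the tables as row (5) of Table~\ref{table:maxA5} and row (7) of Table~\ref{table:maxS5} with $p=3$. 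You flag the exceptional outer automorphisms of $\Alt(6)$ at the outset but never return to them, and your small-$n$ analysis explicitly concludes that only rows (1) arise. As written, your proof would ``verify'' a false version of the tables. (One also needs to check that the remaining almost simple groups with socle $\Alt(6)$, namely $\PGL(2,9)$, $\M_{10}$ and $\PGammaL(2,9)$, have no maximal $\Alt(5)$ or $\Sym(5)$ --- they do not, because they fuse the two classes of $\Alt(5)$ in $\Alt(6)$.)

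A second concrete error: your description of the maximal $\Sym(5)$ in $\Alt(7)$ as a ``classical transitive embedding'' realised by an ``action on the $7$ pairs'' or an ``exceptional $2$-transitive action'' is wrong --- $7$ does not divide $120$, so $\Sym(5)$ has no transitive action of degree $7$ at all. The maximal $\Sym(5)$ in $\Alt(7)$ is the \emph{intransitive} subgroup $(\Sym(2)\times\Sym(5))\cap\Alt(7)$, the stabiliser of a $2$-subset. The subgroup exists and is maximal, so the example survives, but your justification does not. Relatedly, your list of faithful primitive degrees ``$\{5,6,10,15,\dots\}$'' is inaccurate: the faithful primitive actions of $\Alt(5)$ and $\Sym(5)$ have degrees $5$, $6$ and $10$ only, which is exactly why the general argument can start at $n\geq 11$ and why the degree-$10$ case (where neither group is maximal, e.g.\ $\Sym(5)$ on pairs sits inside a larger primitive group) must be absorbed into the finite computation.
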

\begin{proof}
Suppose that $T\cong\Alt(n)$ for some $n\geq 5$. The case $n\leq 10$ can be handled in various ways, including by computer, and we find that $G$ appears in Table~\ref{table:maxA5} rows (1,5) or Table~\ref{table:maxS5} rows (1,7). (Recall that $\Alt(6)\cong\PSL(2,9)$ and $\Sym(6)\cong\PSigmaL(2,9)$.) We thus assume that $n\geq 11$. Note that $\Alt(n)\leq G\leq\Sym(n)$ and we may view $G$ as a permutation group of degree $n$ in the natural way. If $M$ is an intransitive subgroup of $G$, then  $\Alt(k)\times \Alt(m)\leq M$ where $n=k+m$, a contradiction since $n\geq 11$. If $M$ is imprimitive, then $M=(\Sym(k)\wr \Sym(m))\cap G$ where $n=km$ and $k,m\geq 2$, so $\Sym(k)^m\cap \Alt(n)$ is a normal subgroup of $M$, a contradiction. Finally, $\Alt(5)$ and $\Sym(5)$ have no primitive actions of degree greater than $10$.
\end{proof}

\begin{lemma}
 Theorem~\ref{theo:max} holds if $T$ is a sporadic simple group.
\end{lemma}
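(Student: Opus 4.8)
The plan is to prove the sporadic case by reducing it to a finite, explicit enumeration using the existing literature and computation. First I would invoke the standard reference on maximal subgroups of sporadic simple groups, namely the Atlas~\cite{atlas} together with the more recent determinations of maximal subgroups (e.g.\ the Monster and Baby Monster references already cited, \cite{BrayWilson} and \cite{Wilsonmax}), to restrict attention to those sporadic groups $T$ that actually possess a maximal subgroup isomorphic to $\Alt(5)$ or $\Sym(5)$. Since there are only $26$ sporadic groups and each has a finite, known list of maximal subgroups, this is in principle a finite check. The goal is to confirm that the only sporadic examples are precisely those in Table~\ref{table:maxA5} rows (2) and (5), namely $\J_2$ with maximal $\Alt(5)$ and $\Th$ with maximal $\Alt(5)$, and in Table~\ref{table:maxS5} rows (2)--(5), namely $\M_{11}$, $\M_{12}\rtimes\ZZ_2$, $\J_2\rtimes\ZZ_2$ and $\Th$ with maximal $\Sym(5)$.

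The subtlety is that being a maximal subgroup of $T$ is not the same as being a maximal subgroup of an almost simple group $G$ with socle $T$, so I would work with the automorphism group $\Aut(T)$ as well. For each sporadic $T$, I would examine both the maximal subgroups of $T$ and the novelty maximal subgroups that can arise in $T.2$ (or, for the few sporadics with larger outer automorphism group, in the relevant extensions); the $\Th$ entry with maximal $\Sym(5)$ is exactly such a situation, since $\Th$ has trivial outer automorphism group and the $\Sym(5)$ sits maximally inside $\Th$ itself, whereas for $\J_2$ the maximal $\Alt(5)$ in $T$ extends to a maximal $\Sym(5)$ in $\J_2\rtimes\ZZ_2$. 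For each surviving candidate I then need to determine the two data recorded in Theorem~\ref{theo:max}: the number $c$ of conjugacy classes of such maximal subgroups, and the structure of $\N_G(H)/H$ for $H$ a subgroup of index $5$ in $M$. The conjugacy class count is read off from the Atlas or computed directly, and the normaliser quotient is a routine computation in each fixed finite group.

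The main obstacle is the very large groups, above all the Monster $\M$ and the Baby Monster $\B$, where the complete list of maximal subgroups was only recently settled and where direct computation is infeasible; here I would rely on the published classifications \cite{BrayWilson,Wilsonmax} to verify that no maximal $\Alt(5)$ or $\Sym(5)$ occurs, exactly as is done for the degree bound in the proof of Theorem~\ref{theo:GvInsolubleUnfaithful}. For the remaining sporadic groups, whose maximal subgroups are fully tabulated and whose orders are small enough, I expect the verification to be mechanical: for each $T$ I list the maximal subgroups, identify those isomorphic to $\Alt(5)$ or $\Sym(5)$, and for the extensions $T.2$ check for novelties, using \magma~\cite{magma} or the Atlas data where a hand argument is unilluminating. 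Finally, for each confirmed example I compute $c$ and $\N_G(H)/H$ by a short direct calculation, thereby filling in the four columns of Tables~\ref{table:maxA5} and~\ref{table:maxS5} for the sporadic rows and completing the proof.
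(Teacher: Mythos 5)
Your overall strategy --- reduce to a finite check over the $26$ sporadic groups using the known lists of maximal subgroups, handle the Monster and Baby Monster via the published classifications, and compute $c$ and $\N_G(H)/H$ case by case --- is the same as the paper's. But there is a genuine gap in the final step: you assert that once a candidate $(G,M)$ is fixed, computing $\N_G(H)/H$ is ``a routine computation in each fixed finite group'' and ``a short direct calculation.'' This fails for the one case that actually carries the weight of the lemma, namely $G=\Th$ with $M\cong\Sym(5)$. The smallest faithful permutation representation of $\Th$ has degree $143\,127\,000$, and $|\Th|\approx 9\times 10^{16}$, so one cannot simply ask \magma\ for $\N_G(H)$ with $H\cong\Sym(4)$; even conjugacy testing of subgroups in that representation is out of reach. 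The paper closes this gap with a specific theoretical input plus a delicate computation: since $H\cong\Sym(4)$ has a characteristic Klein four-subgroup, $\N_{\Th}(H)$ is a $2$-local subgroup, and by Wilson's determination of the $2$-local subgroups of $\Th$ (\cite[Theorem 2.2]{wilson}) it must lie in one of the two maximal subgroups $2^5.L_5(2)$ or $2^{1+8}.A_9$. All further work is then done inside those much smaller groups (after finding lower-degree faithful actions for them), with the correct conjugacy class of $\Sym(4)$ identified by comparing fixed-point counts in the degree $143\,127\,000$ representation; this yields $|\N_{\Th}(H)|=48$ and hence $\N_G(H)/H\cong\ZZ_2$. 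Without some such localisation argument your proposed verification cannot be carried out.

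Two smaller points. First, your target list is wrong: $\Th$ does \emph{not} have a maximal $\Alt(5)$, and row (5) of Table~\ref{table:maxA5} is $\PSL(2,p^2)$, not a sporadic entry; the only sporadic row of Table~\ref{table:maxA5} is row (2), $\J_2$, while $\Th$ occurs only in Table~\ref{table:maxS5} row (5) with maximal $\Sym(5)$ sitting inside $\Th$ itself. Second, for the Monster the paper cites \cite{NortonWilson} in addition to \cite{BrayWilson}; you should make sure the references you rely on actually rule out every possible $\Alt(5)$ or $\Sym(5)$ maximal, since the Atlas lists of maximal subgroups are not complete for the largest sporadic groups.
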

\begin{proof}
Suppose that $T$ is a sporadic simple group. By~\cite{BrayWilson,NortonWilson}, we may assume that $T$ is not the Monster. The maximal subgroups of the remaining sporadic groups can be found in a variety of places, including \cite{onlineatlas} or the Atlas~\cite{atlas} (whose lists are not always complete).  Most of the cases can be handled in a straightforward manner using the \GAP\ package \textsc{AtlasRep}~\cite{AtlasRep}, and we find that $G$ appears in Table~\ref{table:maxA5} (2) or Table~\ref{table:maxS5} (2--5).

The only case which presents some difficulty is when $G=\Th$, the Thompson sporadic group and $M\cong\Sym(5)$. A computation yields that $|\N_G(H):H|$ is non-trivial and we give a few details. The difficulty arises because the minimum degree of a permutation representation of \thmp\ is $143\,127\, 000$. Combined with the order of $\Th$, this makes it computationally very hard to do any non-trivial calculations directly. To overcome this problem, we perform most calculations in one of the maximal subgroups of $\Th$, only ``pulling back'' to the full group when computations in two different maximal subgroups have to be reconciled.   Even using these tricks, the task is computationally non-trivial. We used \magma\ as it seems to perform better with very high degree permutation
representations than  \GAP.

It follows from the Atlas~\cite{atlas} that there is a unique choice for the conjugacy class of $M$ and, clearly, there is a unique choice for the conjugacy class of $H$ in $M$. Note that $\N_\thmp(H)$ is a $2$-local subgroup (as it normalises the Klein $4$-subgroup of $H$) and therefore, by \cite[Theorem 2.2]{wilson} it must lie in either $M_2$ or $M_3$, which are maximal subgroups of $\Th$ isomorphic to $2^5.L_5(2)$ and $2^{1+8}.A_9$ (in Atlas notation), respectively.

We then use information from the Atlas~\cite{atlas} to find a permutation representation of degree $143\,127\, 000$ for $M_2$ and $M_3$.  Despite the very high degree, the fact that the order of $M_i$ is known means that it is possible to construct a base and strong generating set for $M_i$ using randomised algorithms. It is then easy to determine the orbits of $M_i$, and by taking the action of $M_i$ on one of these orbits, obtain a faithful representation of $M_i$ of a more reasonable degree. With a representation of relatively low degree (less than $10^6$), it is possible to compute all the subgroups of $M_i$ isomorphic to $\Sym(4)$ and determine their normalisers (in $M_i$). 

Carrying out this process, we find that $M_2$ has a single conjugacy class of subgroups isomorphic to $\Sym(4)$, while $M_3$ has four such classes. To identify which of these classes contain $H$, we pull them back into the degree $143\,127\, 000$ representation of \Th. Due to the extremely high degree, it is impossible to test directly the conjugacy of these groups in \thmp, but we can compute simple invariants of them. In particular, we can determine the number of points fixed by a representative of each class. It turns out that only one conjugacy class matches the number of fixed points of $H$, thereby identifying $H$ as conjugate to a particular subgroup of $M_3$. We can then compute the normalizer in $M_3$ of $H$ to find that it has order $48$, completing the verification of Table~\ref{table:maxS5} (5).
\end{proof}

We may now assume that $T$ is a group of Lie type. By~\cite{Craven}, it is not an exceptional group, so it must be a classical group. Let $V$ be the natural module for $T$, let $n$ be the dimension of $V$, let $q$ be the order of the underlying field and $p$ its characteristic.
\begin{lemma}
If $T$ is a classical group, then either $G$ is as in Table~\ref{table:maxS5} (10,11) or $T$ is isomorphic to one of $\PSL(2,q)$ or $\PSp(6,p)$.

\end{lemma}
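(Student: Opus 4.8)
The plan is to exploit the maximality of $M$ to force $M$ to act irreducibly on the natural module $V$, then to bound $n=\dim V$ using the representation theory of $\Alt(5)$, $\Sym(5)$ and their covers set up in Section~\ref{sec:prelim}, and finally to read off the surviving classical groups from the classification of maximal subgroups of low-dimensional classical groups in~\cite{BHRD}.

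First I would show that $M$ acts irreducibly on $V$. Since $T$ is classical it is irreducible on $V$, so any proper nonzero $M$-invariant subspace $W$ would give $M\leq\mathrm{Stab}_G(W)<G$, where $\mathrm{Stab}_G(W)$ is a reducible (in particular much larger) subgroup properly containing the small almost simple group $M$; this contradicts the maximality of $M$. Hence $M$ is irreducible on $V$. I would then regard $V$ as a module for the preimage $\hat M$ of the projective image of $M\cap T$ in the relevant matrix group, so that $\hat M$ is a central extension of a section of $M$, i.e. a quotient of a cover of $\Alt(5)$ or $\Sym(5)$ such as $\SL(2,5)=2\nonsplit\Alt(5)$ or $2\nonsplit\Sym(5)$. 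By the Brauer character tables of these groups and the field-of-definition analysis recorded in Section~\ref{sec:prelim}, every faithful absolutely irreducible module, in any characteristic, has dimension at most $6$; and a $\GF(q)$-irreducible module that is not absolutely irreducible is realised over $\GF(q^2)$ in half its dimension (and is recognisable as a field-extension or unitary configuration), so reduces to the absolutely irreducible analysis in smaller dimension. Thus, for the purpose of identifying $M$ as a class-$\mathcal{S}$ subgroup, I may assume $n\leq 6$.

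With $n\leq 6$ in hand, I would invoke the determination of the maximal subgroups of the classical groups of dimension at most $12$ in~\cite{BHRD}, and go through each type in dimensions $n\in\{2,3,4,5,6\}$, extracting precisely those maximal subgroups with socle $\Alt(5)$. Dimension $2$ leaves the linear case $T\cong\PSL(2,q)$. In dimension $6$, the faithful module of $2\nonsplit\Alt(5)$ carries a symplectic (rather than unitary or orthogonal) form, identified via its Frobenius--Schur indicator, and its rationality forces the field to be prime, yielding $T\cong\PSp(6,p)$ and excluding $\PSU(6,q)$ and $\POmega_6^{\pm}(q)$. In dimension $3$ the surviving maximal cases are exactly $\PSL(3,4)\rtimes\langle\sigma\rangle$ and $\PSL(3,5)$, namely rows~(10) and~(11) of Table~\ref{table:maxS5}, arising from the two $3$-dimensional representations of $\Alt(5)$ over $\GF(4)$ and the special behaviour in defining characteristic over $\GF(5)$. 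In dimensions $4$ and $5$, and in the remaining unitary and orthogonal cases of dimensions $3$ and $6$, I would show that $\Alt(5)$ and $\Sym(5)$ are never maximal.

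The main obstacle is this last, case-by-case, maximality determination: deciding whether an embedded $\Alt(5)$ or $\Sym(5)$ is genuinely maximal or is instead contained in a larger almost simple subgroup (such as $\Alt(6)$ or $\Alt(7)$) or in a geometric $\mathcal{C}_i$ subgroup, together with correctly pinning down the form type through Frobenius--Schur indicators and the exact field of definition that distinguishes $\PSp(6,p)$ from $\PSp(6,q)$ and $\PSL(3,4),\PSL(3,5)$ from other $\PSL(3,q)$. The delicate points are the novelty conditions in $\PSL(3,4)\rtimes\langle\sigma\rangle$, the fusion of conjugacy classes under outer automorphisms, and the rationality constraints---precisely the bookkeeping where, as noted in the introduction, errors are easy to introduce.
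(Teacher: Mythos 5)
Your overall strategy --- bound $n=\dim V$ by the largest dimension of an irreducible representation of a cover of $\Alt(5)$ or $\Sym(5)$, then read off the $n\leq 6$ cases from the tables of \cite{BHRD} --- is the same as the paper's, but the reduction to $n\leq 6$ has genuine gaps. First, maximality of $M$ does not immediately give irreducibility of $M$ on $V$: if $M$ fixes a subspace $W$, maximality only forces $M=\mathrm{Stab}_G(W)$, and one must actually verify that no subspace stabiliser can be isomorphic to $\Alt(5)$ or $\Sym(5)$ (the paper does this by observing that $M$ is not $p$-local, hence not parabolic, and that stabilisers of nondegenerate subspaces contain classical groups of smaller rank). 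Moreover, when $T=\PSL(V)$ and $G$ contains a graph automorphism, $G$ does not act on subspaces, and $M$ could instead stabilise a pair $\{U,W\}$ with $U<W$ --- a case you never consider. You also do not exclude the possibility that $M$ is an imprimitive geometric maximal subgroup, i.e.\ the full stabiliser of a decomposition $V=U\oplus W$ whose summands it interchanges; such an $M$ can be irreducible, so your dichotomy ``reducible versus class-$\mathcal{S}$'' does not dispose of it, and the paper needs a separate argument here, including an ad hoc step in dimension $8$ over $\GF(2)$.

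The more serious gap is the passage from irreducibility of $M$ to the dimension bound. The clean Brauer-character bound of $6$ is for the quasisimple groups $\Alt(5)$ and $2\nonsplit\Alt(5)$, i.e.\ for the derived group of the preimage of $\soc(M)$ (which is why the paper works with $\widehat{X}'$ where $X=\soc(M)$, rather than with a cover of $\Sym(5)$ whose spin characters carry irrationalities). But irreducibility of $M$ does not imply irreducibility of $M\cap T$ or of $\soc(M)$: by Clifford theory, $V$ restricted to $\soc(M)$ may split into conjugate constituents permuted by $M$, and since $M=\N_G(\soc(M))$ this is precisely the configuration that must be excluded before the representation theory applies. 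The paper closes this loop with the paragraph in which $X$ is assumed to fix a nontrivial subspace $U$: then $M$ stabilises a set $\{U,W\}$, and maximality throws this back onto the previously excluded decomposition and subspace-pair cases. Your proposal never separates $M$ from $\soc(M)$, so as written the bound $n\leq 6$ is not established --- a priori a sum of two conjugate constituents only gives $n\leq 12$, which would force you through the higher-dimensional tables of \cite{BHRD} as well. The $n\leq 6$ endgame you describe is essentially what the paper does; it is the reduction that needs repair.
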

\begin{proof}
For a subgroup $K$ of $\PGammaL(V)$, we denote the preimage of $K$ in $\GammaL(V)$ by $\widehat{K}$. That is, $K$ is the image of $\widehat{K}$ under the homomorphism $\phi:\GammaL(V)\rightarrow \GammaL(V)/\Z(\GL(V))$. 

Suppose first that $n\leq 6$. The maximal subgroups of the classical groups of dimension at most $6$ are given in~\cite{BHRD}. The tables at the end of this book are especially useful. Care must be taken due to the fact that the tables give the structure of the pre-images in the matrix group instead of the projective group. One must also have in mind the many exceptional isomorphisms involving $\Alt(5)$ and $\Sym(5)$ (and other isomorphisms, such as $\PSp(4,2)\cong \Sym(6)$). With this in mind, one finds that, apart from the two examples which appear in Table~\ref{table:maxS5} (10,11), all examples have $T$ isomorphic to either $\PSL(2,q)$ or $\PSp(6,p)$. 

From now on, we assume that $n\geq 7$. In particular, $T\not\leq M$ and, since $M$ is maximal in $G$, $TM=G$ and $G/T\cong M/(T\cap M)$. By the Schreier Conjecture, $G/T$ is soluble, and hence $T\cap M\neq 1$.  Let $X=\soc(M)\cong \Alt(5)$. Then $X\leq T$, $M=\N_G(X)$ and $|G:T|=|M/(T\cap M)|\leqslant |M/X|\leqslant 2$. In particular, if $T=\POmega^+(8,q)$, then $G$ does not contain a triality automorphism. Our argument is aided by Aschbacher's Theorem for maximal subgroups of classical groups as developed in \cite{KL}. Since $n\geqslant 7$, either $G\leqslant \PGammaL(V)$ or $T=\PSL(V)$ and $G$ contains a graph automorphism.  In both cases, $G$ acts on the set of subspaces of $V$.

Suppose that $M$ is the stabiliser in $G$ of a nontrivial decomposition $V=U\oplus W$. Let $m=\dim(W)$. Without loss of generality, we may assume that $m\geqslant \lceil n/2\rceil \geqslant 4$. Let $\widehat{Y}$ be the subgroup of $\GammaL(W)$ induced on $W$ by $\widehat{M}_W$.  In the case where $T\neq\PSL(V)$, the maximality of $M$ implies that either $U$ and $W$ are both nondegenerate, or $U$ and $W$ are both totally singular of dimension $n/2$. If either $T=\PSL(V)$ or both $U$ and $W$ are totally singular of dimension $n/2$, then $\widehat{Y}$ contains $\SL(W)$ as a normal subgroup. However, $m\geqslant 4$, contradicting the fact that $M$ is isomorphic to one of $\Alt(5)$ or $\Sym(5)$.  Thus $T\neq \PSL(V)$ and both $U$ and $W$ are nondegenerate.  In particular, $\widehat{Y}$ contains one of $\SU(W)$, $\Sp(W)$ or $\Omega^\epsilon(W)$ as a normal subgroup. Since none of $\PSU(m,q)$  for $m\geq 4$, or $\PSp(m,q)$ for $m\geq 4$, or $\POmega^\epsilon(m,q)$ for $m\geq 5$, have $\Alt(5)$ as a composition factor, it follows that $G$ is an orthogonal group and $m=4$. Since $m\geq n/2$ it follows that $n=7$ or $8$. Thus $\widehat{M}$ contains either $\Omega(3,q)\times \Omega^{\epsilon_2}(4,q)$ or $\Omega^{\epsilon_1}(4,q)\times \Omega^{\epsilon_2}(4,q)$ as a normal subgroup.  Note that, if $n=7$, then $q$ is odd. Also $\Omega(3,q)\cong \PSL(2,q)$ for $q$ odd, $\Omega^-(4,q)\cong \PSL(2,q^2)$ and $\Omega^+(4,q)\cong \SL(2,q)\circ \SL(2,q)$. Since $M$ is insoluble and has $\Alt(5)$ as a unique non-abelian composition factor it follows that $n\neq 7$. Moreover, when $n=8$ we must have that $\epsilon_1=+$, $\epsilon_2=-$ and $q=2$. In this case, the stabiliser of a decomposition in $G$ will be $3$-local (as $\PSL(2,2)\cong \Sym(3)$) which $M$ is not. This contradiction completes the proof that $M$ is not the stabiliser in $G$  of a decomposition $V=U\oplus W$.

Suppose now that $M$ fixes some nontrivial subspace $U$.  As $M$ is maximal in $G$, it is the stabiliser of $U$ in $G$. Since $M$ is not $p$-local, $M$ is not a parabolic subgroup. In particular,  $T\neq \PSL(V)$ and $U$ is either nondegenerate or $p=2$, $G$ is an orthogonal group and $U$ is a nonsingular $1$-space. The latter is not possible as the stabiliser of such a $1$-space in $\POmega^{\pm}(n,q)$  is isomorphic to $\Sp_{n-2}(q)$, which is not contained in $\Sym(5)$. It follows that $U$ is nondegenerate and  $M$ also fixes $U^\perp$ and the decomposition $V=U\oplus U^\perp$. This contradicts the previous paragraph.

We may now assume that $M$ does not fix any nontrivial subspace of $V$. Suppose that, on the other hand,  $X$ does fix  a nontrivial subspace $U$. Since $M=\N_G(X)$,  there is another subspace $W$ fixed by $X$ such that $M$ fixes the set $\{U,W\}$. Moreover, as $M$ is maximal in $G$, it is  the stabiliser in $G$ of $\{U,W\}$ and either $U<W$ or $V=U\oplus W$. The latter case contradicts an earlier statement.  In the former case, since $M$ does not fix $W$, we must have that $T=\PSL(V)$ and $G$ contains a graph automorphism (recall that $n\geq7$ so $T\not\cong\PSp(4,q)$). However, this contradicts $M$ not being $p$-local.

We have shown that $X$ does not fix any nontrivial subspace of $V$ and hence $\widehat{X}$ is irreducible.  By \cite[31.1]{Asch}, we have $\widehat{X}=\widehat{X}'\circ \Z(\widehat{X})$. Since  $\Z(\widehat{X})$ consists of scalars, it follows that $\widehat{X}'$ is irreducible on $V$.  Moreover, since $\widehat{X}'$ is a perfect central extension of $\Alt(5)$, it is isomorphic to $\Alt(5)$ or $2\nonsplit \Alt(5)$. By the Brauer character table of $\widehat{X}'$,  the  (not necessarily absolutely) irreducible representations of $\widehat{X}'$ have dimension at most $6$, contradicting our assumption that $n\geq 7$.
%By~\cite{atlas}  (for $p>5$) and~\cite{modatlas} (for $p\leq 5$),  the Brauer characters of $\Alt(5)$ and $2\nonsplit \Alt(5)$ with integer values have dimension at most $6$ and therefore correspond to absolutely irreducible representations over  $\GF(p)$ of dimension at most $6$ by \cite[Theorem VII.1.17]{HB}. The Brauer characters with non-integer values have dimension at most $3$, and $\GF(p^2)$ is a splitting field for $\Alt(5)$ and $2\nonsplit \Alt(5)$, hence the corresponding irreducible representations over $\GF(p)$ have dimension at most $6$. This contradicts our assumption that $n\geq 7$.
\end{proof}
The next lemma follows from Dickson's classification of the subgroups of $\PGL(2,q)$ \cite{dickson}.
\begin{lemma}\label{lem:normA4}
The subgroup of $\PSL(2,q)$ isomorphic to $\Alt(4)$ is self-normalising if and only if $q$ is even or $q\equiv \pm 3 \pmod 8$. For $q$ odd, $\Sym(4)$ is a self-normalising subgroup of $\PGL(2,q)$ and it is the normaliser of an $\Alt(4)$. 
\end{lemma}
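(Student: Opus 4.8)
The plan is to invoke Dickson's classification of subgroups of $\PSL(2,q)$ and $\PGL(2,q)$, together with explicit information about the conjugacy classes of the subgroups isomorphic to $\Alt(4)$ and $\Sym(4)$. First I would recall from Dickson's theorem that $\PSL(2,q)$ contains a subgroup isomorphic to $\Alt(4)$ for all $q$ (with the standard exception of very small $q$, which can be checked separately), and that $\PGL(2,q)$ for $q$ odd contains a subgroup isomorphic to $\Sym(4)$ precisely when $q\equiv\pm1\pmod 8$ or when $q$ is a suitable power of an odd prime; more to the point, for $q$ odd the group $\PGL(2,q)$ always contains $\Sym(4)$ as a subgroup that properly contains an $\Alt(4)$ lying in $\PSL(2,q)$.

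The core of the argument is a counting-of-classes computation. I would use the standard fact that the number of conjugacy classes of $\Alt(4)$-subgroups in $\PSL(2,q)$ depends on the congruence class of $q$ modulo $8$: when $q\equiv\pm3\pmod 8$ (or $q$ even) there is a single class, whereas when $q\equiv\pm1\pmod 8$ there are two classes, which are fused in $\PGL(2,q)$. The normaliser in $\PSL(2,q)$ of an $\Alt(4)$ is either $\Alt(4)$ itself or $\Sym(4)$, and the key point is that the normaliser equals $\Sym(4)$ exactly when that $\Sym(4)$ already lies inside $\PSL(2,q)$, which happens precisely when $q\equiv\pm1\pmod 8$. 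Thus the $\Alt(4)$ is self-normalising in $\PSL(2,q)$ if and only if $q$ is even or $q\equiv\pm3\pmod 8$, giving the first assertion.

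For the second assertion, with $q$ odd I would pass to $\PGL(2,q)$. Here the containing $\Sym(4)$ is realised, and by Dickson's list the normaliser of an $\Alt(4)$ in $\PGL(2,q)$ is exactly this $\Sym(4)$; since $\Sym(4)$ has trivial outer automorphism contribution detectable inside $\PGL(2,q)$ and is its own normaliser in that group (no larger subgroup of $\PGL(2,q)$ in Dickson's list contains $\Sym(4)$ as a proper normal subgroup except via larger $\PSL$ or $\PGL$ subfield subgroups, which do not arise for the full group), we conclude $\Sym(4)$ is self-normalising in $\PGL(2,q)$ and is the normaliser of the $\Alt(4)$ it contains.

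The main obstacle I expect is the careful bookkeeping of the conjugacy-class fusion: one must verify precisely when the two $\PSL(2,q)$-classes of $\Alt(4)$ fuse in $\PGL(2,q)$ and correctly match this to the congruence condition on $q$, since it is exactly this fusion behaviour that controls whether the normaliser grows from $\Alt(4)$ to $\Sym(4)$. This is where sign errors or off-by-one mistakes in the modular conditions most easily creep in, so I would cross-check the congruence conditions against the explicit subgroup orders and the index $|\PGL(2,q):\PSL(2,q)|=2$ computation, and verify small cases directly.
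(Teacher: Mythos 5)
Your approach is essentially the paper's: the paper offers no argument beyond the single sentence that the lemma follows from Dickson's classification of the subgroups of $\PGL(2,q)$, and your elaboration via the count of conjugacy classes of $\Alt(4)$, their fusion in $\PGL(2,q)$, and the self-normalising $\Sym(4)$ is the standard way to extract the statement from that classification. Two small slips are worth correcting: $\PSL(2,2^f)$ with $f$ odd contains no $\Alt(4)$ at all, so the claim of existence ``for all $q$'' is false (though the lemma is then vacuous there), and it is $\PSL(2,q)$ --- not $\PGL(2,q)$ --- that contains $\Sym(4)$ precisely when $q\equiv\pm1\pmod 8$, whereas for every odd $q$ the group $\PGL(2,q)$ contains $\Sym(4)$, as you correctly state in your following clause.
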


\begin{lemma}
 Theorem~\ref{theo:max} holds when $T\cong\PSL(2,q)$.
\end{lemma}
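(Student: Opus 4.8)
The plan is to determine, for each almost simple group $G$ with socle $T\cong\PSL(2,q)$, exactly when $G$ possesses a maximal subgroup $M\cong\Alt(5)$ or $\Sym(5)$, and then to compute $c$ and $\N_G(H)/H$. The fundamental tool is Dickson's classification of subgroups of $\PGL(2,q)$, as packaged in Lemma~\ref{lem:normA4}, together with the standard facts about when $\PSL(2,q)$ contains $\Alt(5)$. Recall that $\PSL(2,q)$ contains a subgroup isomorphic to $\Alt(5)$ precisely when $q\equiv\pm1\pmod{10}$ (equivalently $5\mid q^2-1$ with the right parity), or when $q=5^r$, and that in the generic case these $\Alt(5)$'s fall into two $\PSL(2,q)$-classes fused in $\PGL(2,q)$. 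I would organise the proof around the characteristic: the case $p=5$ (where $\Alt(5)\cong\PSL(2,5)$ is a subfield-type subgroup) behaves differently from $p\neq5$.

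First I would treat the existence and conjugacy question. For $p\neq 5$, $\Alt(5)<\PSL(2,q)$ exactly when $q=p\equiv\pm1\pmod{10}$ or $q=p^2$ with $p\equiv\pm3\pmod{10}$ (so that $5\mid q+1$ and $\Alt(5)$ appears in the field $\GF(p^2)$ but not $\GF(p)$); this accounts for rows (3),(4),(5) of Table~\ref{table:maxA5} and rows (6),(7) of Table~\ref{table:maxS5}. For $p=5$, one has $\PSL(2,5)\cong\Alt(5)$, and $\Alt(5)$ sits inside $\PSL(2,5^r)$ as a subfield subgroup, maximal exactly when $r$ is an odd prime (rows (7),(9) of the two tables), while the even case $q=2^{2r}$ gives $\Alt(5)\cong\PSL(2,4)\leq\PSL(2,2^{2r})$, maximal for $r$ prime (rows (6),(8)). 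In each case, Dickson's theorem pins down the number of $T$-classes and whether they are fused or stabilised by the outer (field/diagonal) automorphisms, giving the value $c$ and also determining whether $M$ is actually \emph{maximal} in the relevant overgroup $G$ (e.g. an $\Alt(5)$ may be maximal in $\PSL(2,q)$ but sit inside an $\Sym(5)=\PGL$-normaliser, pushing the maximal-$\Sym(5)$ example up to $\PSigmaL$ or $\PGL$).

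Next I would compute $\N_G(H)/H$, where $H\cong\Alt(4)$ is the index-$5$ subgroup of $M$. This is where Lemma~\ref{lem:normA4} does the real work: it tells me that in $\PSL(2,q)$ the $\Alt(4)$ is self-normalising iff $q$ is even or $q\equiv\pm3\pmod8$, and that in $\PGL(2,q)$ its normaliser is an $\Sym(4)$. So I would track, for each row, which group $G$ lies between $\PSL(2,q)$ and $\PGammaL(2,q)$ and intersect the Dickson normaliser data with $G$. The quadratic residue condition on $q$ modulo $8$ combines with the modulo-$5$ (or modulo-$10$) existence condition to produce the congruences modulo $40$ displayed in rows (3)–(5) of Table~\ref{table:maxA5} and row (7) of Table~\ref{table:maxS5}; the splitting $\pm1,\pm9$ versus $\pm11,\pm19\pmod{40}$ is exactly the interaction between $q\equiv\pm1\pmod8$ (giving $\N/H=\ZZ_2$) and $q\equiv\pm3\pmod8$ (giving $\N/H=1$), refined by the residue of $q$ mod $5$ guaranteeing $\Alt(5)\leq\PSL(2,q)$. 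For the $\Sym(5)$ rows I must remember that $H=\Alt(4)$ sits inside $M=\Sym(5)$ whose own $\Alt(4)$-normaliser inside $M$ is already $\Sym(4)$, so the ambient normaliser can only contribute an extra involution coming from a field automorphism, yielding $\N_G(H)/H=\ZZ_2$ in rows (7),(12),(14)-type situations and $1$ otherwise.

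The main obstacle I anticipate is bookkeeping rather than conceptual: correctly reconciling the three independent congruence conditions (the modulo-$5$ existence of $\Alt(5)$, the modulo-$8$ behaviour of $\Alt(4)$ from Lemma~\ref{lem:normA4}, and the even/odd parity distinguishing $p=5$, $p=2$, and generic $p$) into the clean modulo-$40$ and modulo-$10$ statements in the tables, while simultaneously verifying \emph{maximality} in each candidate overgroup. The delicate points are the small and exceptional cases---$q=4,5,9$ and the subfield subgroups---where the generic Dickson picture can fail (for instance $\PSL(2,5)\cong\Alt(5)$ is not core-free, and $q=9$ collides with the alternating-group analysis via $\PSL(2,9)\cong\Alt(6)$); these I would excise by hand or by computer before applying the uniform argument. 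Once the congruences are assembled, reading off $c$ and $\N_G(H)/H$ is mechanical, so I expect the proof to reduce to a careful case division on $q\bmod 40$ (for $p\neq 2,5$) plus separate short arguments for $p=2$ and $p=5$.
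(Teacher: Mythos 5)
Your overall strategy is the same as the paper's: use Dickson's classification (via \cite[Table 8.1]{BHRD} and Lemma~\ref{lem:normA4}) to locate the maximal $\Alt(5)$'s and $\Sym(5)$'s, count their conjugacy classes, and compute $\N_G(H)/H$ by intersecting the normaliser of an $\Alt(4)$ (or $\Sym(4)$) with $G$, splitting into $p=2$, $p=5$ and generic $p$ and then on $q\bmod 40$. However, two bookkeeping errors in the sketch would, if executed literally, produce a wrong table. First, you assign row (6) of Table~\ref{table:maxS5} to the $p\neq 5$ branch, but that row is $G=\PSL(2,5^2)$: the maximal subgroup there is $\Sym(5)\cong\PGL(2,5)$, the normaliser of a subfield $\PSL(2,5)$, and it sits inside $\PSL(2,25)$ itself rather than arising as $\PSigmaL(2,p^2)$ for $p\equiv\pm3\pmod{10}$; since your $p=5$ branch as written only treats $q=5^r$ with $r$ an odd prime, this row would be missed entirely. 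Second, for $q=2^{2r}$ you claim the $\Sym(5)$ examples are ``maximal for $r$ prime,'' but row (8) of Table~\ref{table:maxS5} requires $r$ to be an \emph{odd} prime: when $r=2$ the subgroup $\Alt(5)\cong\PSL(2,4)$ is exactly the fixed subgroup of the field automorphism of order two, so its normaliser in $\PSL(2,16).2$ is $\Alt(5)\times\ZZ_2$ rather than $\Sym(5)$, and no maximal $\Sym(5)$ arises; your plan would insert a spurious row. With these two corrections the argument coincides with the paper's proof, and the remaining mod-$40$ bookkeeping you describe is carried out there exactly as you anticipate.
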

\begin{proof}
Since $\PSL(2,5)\cong \Alt(5)$, we see from \cite[Table 8.1]{BHRD} that $\PSL(2,5^2)$ has two classes of maximal subgroups isomorphic to $\Sym(5)$. This gives row (6) of Table~\ref{table:maxS5}. The same isomorphism also yields that there is a unique conjugacy class of maximal $\Alt(5)$ subgroups in $\PSL(2,5^r)$ for $r$ an odd prime and a unique conjugacy class of maximal $\Sym(5)$ subgroups in $\PGL(2,5^r)$ (and no such maximal subgroups when $q=p^r$ with $r$ not prime). Since $r$ is odd, $5^r\equiv -3\pmod 8$ and Lemma~\ref{lem:normA4} implies  row (7) of Table~\ref{table:maxA5} and row (9) of Table~\ref{table:maxS5}. 

Since $\PSL(2,4)\cong \Alt(5)$, we see from \cite[Table 8.1]{BHRD} that $\Alt(5)$ is a maximal subgroup of $\PSL(2,2^{2r})$ for $r$ an odd prime and there is a unique conjugacy class of such subgroups. Such a subgroup is normalised by a field automorphism of $T$ of order $2r$. When $r=2$, such an $\Alt(5)$ is the centraliser of the field automorphism of order two but when $r$ is odd the centraliser of a field automorphism of order two is $\PSL(2,2^r)$, which does not contain an $\Alt(5)$. Thus when $r$ is odd, the normaliser of $\Alt(5)$ in $\PSL(2,2^{2r}).2$ is $\Sym(5)$ and is a maximal subgroup. Again there is a unique conjugacy class of such subgroups. Lemma~\ref{lem:normA4} then yields  row (6) of Table~\ref{table:maxA5} and row (8) of Table~\ref{table:maxS5}.

Using \cite[Table 8.1]{BHRD}, we see that $\Alt(5)$ is a maximal subgroup of $\PSL(2,p)$ for $p\equiv \pm 1\pmod{10}$. There are two classes of such maximals and they are self-normalising in $\PSL(2,p)$. This gives rows (3) and (4) of Table~\ref{table:maxA5} with the normaliser of an $\Alt(4)$ given by Lemma~\ref{lem:normA4}. We also see that there are two classes of maximal $\Alt(5)$ subgroups in $\PSL(2,p^2)$ when $p\equiv \pm 3\pmod{10}$. Since $p^2\equiv 1\pmod 8$, by Lemma~\ref{lem:normA4} the normaliser in $T$ of an $\Alt(4)$ is $\Sym(4)$ and we get row (5) of Table~\ref{table:maxA5}. Finally, each of these $\Alt(5)$ subgroups is normalised but not centralised by a field automorphism. Hence we obtain two conjugacy classes of maximal $\Sym(5)$ subgroups in $\PSigmaL(2,p^2)$. The normaliser of an $\Alt(4)$ in $\PSigmaL(2,p^2)$ is then $\Sym(4)\times \ZZ_2$ and hence the $\Sym(4)$ in $\PSigmaL(2,p^2)$ has normaliser twice as large. Hence we have row (7) of Table~\ref{table:maxS5}.
\end{proof}

Before dealing with the case where $T\cong\PSp(6,p)$ we need a couple of lemmas. For a group $X$ fixing a set $U$, we denote the permutation group of $X$ induced on $U$ by $X^U$.

\begin{lemma}\label{semisimple} \cite[p.36]{Wall}
Let $p$ be an odd prime. A semisimple element $A$ of $\GL(d,p)$ is conjugate to an element of $\Sp(d,p)$ if and only if $A$ is conjugate to $(A^{-1})^T$.
\end{lemma}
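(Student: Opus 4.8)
The plan is to prove both directions, the converse being the substantial one. For the forward direction, suppose $A$ is conjugate to some $B\in\Sp(d,p)$, and let $J$ be the invertible alternating Gram matrix of the preserved form, so that $B^{T}JB=J$ and hence $JBJ^{-1}=(B^{-1})^{T}$. Thus $B$ is conjugate to $(B^{-1})^{T}$, and since $A\sim B$ forces $(A^{-1})^{T}\sim(B^{-1})^{T}$, we obtain $A\sim(A^{-1})^{T}$; this uses neither semisimplicity nor the parity of $p$. It is worth recording the general principle behind the converse: a matrix is an isometry of \emph{some} nondegenerate bilinear form if and only if it is conjugate to $(A^{-1})^{T}$ (equivalently, since $A^{T}\sim A$ always, if and only if $A\sim A^{-1}$). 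So the hypothesis is exactly the condition that $A$ preserve some nondegenerate form, and the real task is to arrange that the form can be taken to be alternating.

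For the converse I would regard $V=\GF(p)^{d}$ as a module over $R=\GF(p)[x]$ with $x$ acting as $A$. Invertibility of $A$ means $x$ does not divide the characteristic polynomial, and semisimplicity means $V$ is a semisimple $R$-module, so $V$ decomposes as a direct sum of isotypic components $V_{f}$ indexed by the monic irreducible factors $f$. The operation $A\mapsto(A^{-1})^{T}$ corresponds on modules to replacing each $f$ by its reciprocal $f^{*}(x)=x^{\deg f}f(x^{-1})/f(0)$, so the hypothesis $A\sim(A^{-1})^{T}$ says precisely that $f$ and $f^{*}$ occur with equal multiplicity for every $f$.

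I would then build an $A$-invariant nondegenerate alternating form componentwise. For a pair $\{f,f^{*}\}$ of distinct irreducibles of common multiplicity, I pair $V_{f}$ with $V_{f^{*}}$ hyperbolically, making each totally isotropic and using the natural duality between them; since $A$ acts on $V_{f^{*}}$ as the inverse-transpose of its action on $V_{f}$, the resulting alternating form is $A$-invariant. For a self-reciprocal $f\neq x\pm 1$, the degree of $f$ is even because its roots are permuted without fixed points by $\lambda\mapsto\lambda^{-1}$, and on $V_{f}$ one manufactures an alternating form from the field $\End_{R}(V_{f})$ together with a suitable trace form, using that $p$ is odd to separate the alternating from the symmetric case. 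The orthogonal direct sum of these forms handles everything except the components for $f=x-1$ and $f=x+1$.

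The main obstacle is exactly these last two components, on which $A$ acts as the scalar $\pm 1$ and hence preserves every bilinear form: a nondegenerate \emph{alternating} form exists on such a space if and only if its dimension is even. Thus the crux is to show that the eigenvalues $+1$ and $-1$ each occur with even multiplicity, so that the $\pm 1$-eigenspaces each support an alternating form. This is the delicate point, and is where the detailed parametrisation of bilinear-form modules over $\GF(p)$ in odd characteristic (as in \cite{Wall}) is needed; once the parities are under control, assembling the componentwise forms and checking nondegeneracy of the orthogonal sum is routine.
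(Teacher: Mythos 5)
The paper offers no proof of this lemma: it is quoted verbatim from Wall's paper, so there is no argument of the authors' own to compare yours against. Your forward direction is correct, and your reduction of the converse to the isotypic components $V_f$ --- hyperbolic pairing of $V_f$ against $V_{f^*}$ when $f\neq f^*$, and the trace-form construction on $V_f$ for self-reciprocal $f\neq x\pm 1$ --- is the standard and correct route.

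The gap is at exactly the point you flag as the crux, and it cannot be repaired: the hypothesis $A\sim(A^{-1})^{T}$ does \emph{not} force the eigenvalues $+1$ and $-1$ to occur with even multiplicity. For instance, $A=\mathrm{diag}(1,-1)\in\GL(2,p)$ is semisimple and equals $(A^{-1})^{T}$, yet $\det A=-1$, so $A$ is not conjugate to any element of $\Sp(2,p)=\SL(2,p)$; more generally $A\sim(A^{-1})^{T}$ only yields $\det A=\pm 1$. So the claim you defer to Wall's parametrisation is false, and in fact the lemma as printed is a slight misquotation: Wall's criterion carries the additional requirement that the elementary divisors $x-1$ and $x+1$ (for semisimple $A$, the multiplicities of the eigenvalues $\pm 1$) each occur an even number of times. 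With that hypothesis added, your componentwise construction goes through and the proof is complete; without it, no argument can close the gap, because the implication you need is simply not true. (The paper's application of the lemma is to two-dimensional irreducible constituents of faithful representations of $2\nonsplit\Alt(4)$ and related groups, where the offending eigenvalue configuration does not arise, so the error is harmless there --- but a self-contained proof must either add the parity hypothesis or verify it in each application.)
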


\begin{lemma}\label{lem:spsubs}\cite[Lemmas 4.1.1 and 4.1.12]{KL}
 Let $X\leqslant \Sp(d,p)$ and suppose that $X$ fixes an $m$-dimensional subspace $U$ of the natural module. If $X^U$ is irreducible, then $U$ is either nondegenerate or totally isotropic. Moreover
\begin{enumerate}[{\rm (i)}]
 \item if $U$ is nondegenerate then $\Sp(d,p)_U\cong \Sp(m,p)\times \Sp(d-m,p)$;
 \item if $(|X|,p)=1$ and $U$ is totally isotropic, then $X$ fixes another totally isotropic subspace $U^*$ such that $U$ and  $U^*$ are disjoint and $\dim U = \dim U^*$. Moreover, 
$$(\Sp(d,p)_{U\oplus U^*})^{U\oplus U^*}=\left\{ \begin{bmatrix} A &0 \\0 &(A^{-1})^T\end{bmatrix} \mid A\in\GL(m,p)\right\}.$$
\end{enumerate}
\end{lemma}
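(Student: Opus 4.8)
The plan is to prove the three assertions in turn, using throughout that an element of $\Sp(d,p)$ stabilising $U$ automatically stabilises $U^\perp$ and hence every subspace canonically built from $U$ and the form. For the opening dichotomy, I would consider the radical $\mathrm{rad}(U)=U\cap U^\perp$ of the restricted form. Since $X$ fixes $U$ by hypothesis and fixes $U^\perp$ because $X\leqslant\Sp(d,p)$, the subspace $U\cap U^\perp$ is $X$-invariant and contained in $U$. As $X^U$ is irreducible, it is either $0$ or $U$: in the first case the form is nondegenerate on $U$, and in the second $U\leqslant U^\perp$, i.e. $U$ is totally isotropic.

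For (i), if $U$ is nondegenerate then $V=U\perp U^\perp$ with both summands nondegenerate; any element of $\Sp(d,p)$ stabilising $U$ stabilises $U^\perp$ and restricts to a symplectic map on each summand, and conversely any such pair glues together, so $\Sp(d,p)_U\cong\Sp(m,p)\times\Sp(d-m,p)$. Turning to (ii), I would first reduce to the Lagrangian case. Because $(|X|,p)=1$, Maschke's theorem lets me choose an $X$-invariant complement $C$ to $U$ inside $U^\perp$. Since $U$ is totally isotropic and the ambient form is nondegenerate, the radical of the restriction of the form to $U^\perp$ is $U^\perp\cap U=U$, so the form descends to a nondegenerate form on $U^\perp/U\cong C$; thus $C$ is an $X$-invariant nondegenerate subspace. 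Then $W_0:=C^\perp$ is an $X$-invariant nondegenerate symplectic subspace of dimension $d-\dim C=2m$ containing $U$ (as $C\leqslant U^\perp$ forces $U\leqslant C^\perp$), and inside $W_0$ the totally isotropic subspace $U$ has dimension $m$, which is half of $\dim W_0$, so $U$ is Lagrangian in $W_0$.

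The substantive point, and the step I expect to be the main obstacle, is to produce an \emph{$X$-invariant} Lagrangian complement $U^*$ to $U$ in $W_0$; this is exactly where the coprimality hypothesis is indispensable. The set of Lagrangian complements of $U$ in $W_0$ is nonempty and carries the structure of an affine space (torsor) over the $\GF(p)$-vector space of symmetric bilinear forms on $U$: after fixing one complement $W$, every other complement is the graph of a linear map $W\to U$, and the graph is totally isotropic precisely when the associated bilinear form on $W$, transported along the perfect pairing between $U$ and $W$ induced by the symplectic form, is symmetric. The group $X$ acts on this affine space by affine transformations, and since $|X|$ is coprime to $p$ a standard averaging argument (equivalently, the vanishing of $H^1(X,-)$ with coefficients in a $\GF(p)X$-module) produces an $X$-fixed point. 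This fixed point is the required $X$-invariant Lagrangian complement $U^*$, which is disjoint from $U$ and satisfies $\dim U^*=m=\dim U$ by construction.

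Finally, for the displayed description of the induced group I would pick a basis $e_1,\dots,e_m$ of $U$ together with the dual basis $f_1,\dots,f_m$ of $U^*$ satisfying $\langle e_i,f_j\rangle=\delta_{ij}$, which is possible because the symplectic form pairs the complementary Lagrangians $U$ and $U^*$ perfectly. An element of $\Sp(d,p)$ stabilising both $U$ and $U^*$ then acts as some $A\in\GL(m,p)$ on $U$ and some $B\in\GL(m,p)$ on $U^*$, and preservation of the pairing $\langle e_i,f_j\rangle=\delta_{ij}$ forces $B=(A^{-1})^T$; every such block matrix $\mathrm{diag}(A,(A^{-1})^T)$ arises, giving the stated Levi subgroup. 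In summary, assertions (1) and (i) are immediate from invariance and irreducibility, while the real content of (ii) is the fixed-point construction, where recasting the problem as a $p'$-group acting affinely over $\GF(p)$ lets the hypothesis $(|X|,p)=1$ do all the work.
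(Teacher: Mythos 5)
The paper does not actually prove this lemma; it is quoted verbatim from Kleidman--Liebeck \cite[Lemmas 4.1.1 and 4.1.12]{KL}, so there is no in-paper argument to compare against. Your self-contained proof is correct. The opening dichotomy via the $X$-invariance of $\mathrm{rad}(U)=U\cap U^\perp$ and part (i) via $V=U\perp U^\perp$ are standard and fine. For (ii) you correctly identify the one genuinely substantive step: producing an $X$-invariant totally isotropic complement. Your reduction to the Lagrangian case is sound (the $X$-invariant Maschke complement $C$ of $U$ in $U^\perp$ is nondegenerate because $\mathrm{rad}(U^\perp)=U$ and $C\cap U=0$, so $W_0=C^\perp$ is a nondegenerate $2m$-space containing $U$ as a Lagrangian), and the fixed-point argument on the torsor of Lagrangian complements over $\mathrm{Sym}^2(U)$ is the right tool: the difference of two Lagrangian complements is a symmetric form, the $X$-action is affine and compatible with the linear action on $\mathrm{Sym}^2(U)$, and coprimality lets you average. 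The dual-basis computation forcing $B=(A^{-1})^T$ is also correct. One small reading point worth making explicit: the notation $\Sp(d,p)_{U\oplus U^*}$ in the statement must be interpreted as the \emph{partwise} stabiliser of the decomposition (fixing $U$ and $U^*$ individually), exactly as you do --- if it meant the stabiliser of the subspace $U+U^*$, the induced group would be all of $\Sp(2m,p)$ and the displayed equality would fail; the paper's later usage (``the partwise stabiliser \dots of the decomposition'') confirms your reading.
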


\begin{lemma}
 Theorem~\ref{theo:max} holds when $T\cong\PSp(6,p)$.
\end{lemma}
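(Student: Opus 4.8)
The plan is to determine exactly which almost simple groups $G$ with socle $T \cong \PSp(6,p)$ contain a maximal $\Alt(5)$ or $\Sym(5)$, and to compute the relevant normaliser data for the last columns of Tables~\ref{table:maxA5} and~\ref{table:maxS5}. Since the preceding lemma has reduced us to $n=6$ and $T=\PSp(6,p)$, the natural module $V$ carries a nondegenerate alternating form, and by the argument of the classical-groups lemma the relevant copies of $X=\soc(M)\cong\Alt(5)$ act irreducibly on $V$ (the reducible cases having been excluded there, or being excludable by the same decomposition analysis via Lemma~\ref{lem:spsubs}). So the crux is: \emph{which $6$-dimensional irreducible $\Alt(5)$-representations in characteristic $p$ actually embed in $\Sp(6,p)$, and when are the resulting $\Alt(5)$ (resp.\ $\Sym(5)$) subgroups maximal in some almost simple $G$?}

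\textbf{Step 1 (Identify the representations).} First I would use the Brauer character table of $\widehat{X}'\in\{\Alt(5),\,2\nonsplit\Alt(5)\}$ to list the faithful irreducible modules of dimension $6$ over $\GF(p)$ (or $\GF(p^2)$) for each characteristic $p$. For $\Alt(5)$ itself the relevant $6$-dimensional modules come from realising the two complex $3$-dimensional representations over a quadratic field and combining them, or from modules that are irreducible of dimension $6$ only after accounting for irrationalities; for $2\nonsplit\Alt(5)\cong\SL(2,5)$ one gets genuinely $6$-dimensional symplectic modules. Using the splitting-field discussion in Section~\ref{sec:prelim}, I would pin down for which $p$ a $6$-dimensional module is absolutely irreducible over $\GF(p)$ versus $\GF(p^2)$; this is where the congruence conditions on $p$ modulo $5$ (hence modulo $10$, $8$, $40$, $120$) originate.

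\textbf{Step 2 (Symplectic versus orthogonal form).} Next I would decide, for each candidate module, whether the $X$-invariant bilinear form is alternating (giving an embedding into $\Sp(6,p)$) or symmetric. Here Lemma~\ref{semisimple} is the key tool: a semisimple element lies in $\Sp(6,p)$ precisely when it is conjugate to $(A^{-1})^T$, so by checking this criterion on generators of $X$ (equivalently, by inspecting the Brauer character and its behaviour under the transpose-inverse/dual) I can determine the form type. The self-duality of the module and the Frobenius--Schur indicator govern whether the form is symmetric or alternating, and this is what separates the $\PSp(6,p)$ examples from spurious orthogonal ones.

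\textbf{Step 3 (Maximality, fusion and normalisers).} Finally I would invoke the tables of \cite{BHRD} together with Aschbacher-class analysis to confirm that each surviving $\Alt(5)$ or $\Sym(5)$ is \emph{maximal} in the appropriate $G\in\{\PSp(6,p),\PGSp(6,p)\}$, ruling out containment in a larger $\mathcal{S}$- or $\mathcal{C}_i$-subgroup; count the conjugacy classes $c$ (watching for classes fused by $\PGSp/\PSp$ or by field automorphisms); and compute $\N_G(H)/H$ for $H\cong\Alt(4)$ of index $5$ in $M$. The last computation reduces to a question about how $\Alt(4)\cong(\ZZ_2^2)\rtimes\ZZ_3$ sits inside $\PSp(6,p)$, and $\N_G(H)/H$ is governed by a torus centralising the relevant semisimple part, explaining the cyclic groups $\ZZ_{p\pm1}$ and $\ZZ_3$ in rows (8)--(10) of Table~\ref{table:maxA5} and the entries in rows (12)--(14) of Table~\ref{table:maxS5}. \textbf{The main obstacle} I expect is Step 2 together with the bookkeeping of exactly which residue class of $p$ produces which embedding: distinguishing, purely from modular character and form data, the cases where $H$ normalises a split versus nonsplit torus (forcing $\ZZ_{p-1}$ versus $\ZZ_{p+1}$) and correctly matching these against the quartic congruences modulo $120$; this is where the small exceptional case $p=3$ must be treated by direct computation, yielding rows (8) of Table~\ref{table:maxA5} and (13) of Table~\ref{table:maxS5}.
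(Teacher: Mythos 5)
Your outline shares the paper's skeleton (Brauer characters, \cite{BHRD} for maximality, a centraliser computation for $\N_G(H)/H$), but it front-loads effort in the wrong place and leaves the decisive computation unproved. Steps 1 and 2 are essentially unnecessary: since $n=6$, the existence, maximality, class counts and fusion of the $\Alt(5)$ and $\Sym(5)$ subgroups of $\PSp(6,p)$ and $\PGSp(6,p)$, together with the congruences mod $8$ and mod $40$, are read off directly from \cite[Table 8.29]{BHRD}; there is no need to re-derive which $6$-dimensional modules of $2\nonsplit\Alt(5)$ or $2\nonsplit\Sym(5)^-$ carry an alternating form. The entire content of the lemma is your Step 3, and there the phrase ``governed by a torus centralising the relevant semisimple part'' is a heuristic, not an argument: at best it suggests a lower bound for $\N_G(H)/H$, and it does not explain which torus occurs or why nothing larger normalises $H$.

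Concretely, what is missing is the following. One must pass to the preimage $\widehat{H}\cong 2\nonsplit\Alt(4)$ (resp.\ $2\nonsplit\Sym(4)^-$) in $\Sp(6,p)$ and decompose $V\downarrow\widehat{H}$ using the Brauer table of $2\nonsplit\Alt(4)$; the congruence that separates $\ZZ_{p-1}$ from $\ZZ_{p+1}$ is $p\bmod 3$ (rationality of the two faithful degree-$2$ characters of $2\nonsplit\Alt(4)$), not a splitting-field condition on the $6$-dimensional module for $\widehat{M}$ as your Step 1 suggests --- this is where the mod-$120$ conditions actually come from. For $p\equiv1\pmod3$ one gets a nondegenerate $2$-space plus a dual pair of totally isotropic $2$-spaces (via Lemmas~\ref{semisimple} and~\ref{lem:spsubs}), with centraliser containing $\Z(\GL(2,p))\cong\ZZ_{p-1}$; for $p\equiv2\pmod3$ one gets a nondegenerate $2\oplus4$ decomposition with the image of $\widehat{H}$ on the $4$-space inside $\ZZ_{p+1}\circ\Sp(2,p^2)$, giving $\ZZ_{p+1}$. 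One must then prove the matching upper bound $\N_{\Sp(6,p)}(\widehat{H})=\C_{\Sp(6,p)}(\widehat{H})\widehat{H}$, which requires Lemma~\ref{lem:normA4} (self-normalisation of $\Alt(4)$ in $\PSL(2,p)$ for $p\equiv\pm3\pmod 8$, and of $\Sym(4)$ for the row-(12) case) and is nowhere addressed in your plan. Finally, for row (14) of Table~\ref{table:maxS5} one must show that the index-$5$ subgroup of the maximal $\Sym(5)$ in $\PGSp(6,p)$ interchanges the two totally isotropic $2$-spaces, cutting the centraliser down to order $2$. Without these steps the fourth columns of Tables~\ref{table:maxA5} and~\ref{table:maxS5} --- which is what the lemma is really asserting --- are not established.
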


\begin{proof}
The cases when $p\leq 5$ can be verified by a \magma\ calculation. (In this case, we obtain row (8) of Table~\ref{table:maxA5} and row (13) of Table~\ref{table:maxS5}.) We assume now that $p\geq 7$.

Suppose first that $p\equiv \pm 1\pmod{8}$. By \cite[Table 8.29]{BHRD},  $M\cong\Sym(5)$, $G=T$ and there are two possibilities for the conjugacy class containing $M$. Let $\widehat{M}\cong 2\nonsplit \Sym(5)^{-}$  be the preimage of $M$ in $\Sp(6,p)$ and let $\widehat{H}\cong 2\nonsplit \Sym(4)^{-}$ be the index five subgroup of $\widehat{M}$ corresponding to $H$. Note that $V\downarrow \widehat{M}$ is absolutely irreducible. 
%Let $\chi$ be the character for $V\downarrow \widehat{M}$ and $\beta$ the corresponding Brauer character. 
By considering the Brauer character tables for $2\nonsplit \Sym(5)^{-}$ and $2\nonsplit \Sym(4)^{-}$,  we deduce that  
%$\beta\downarrow \widehat{H}=\beta_1+\beta_2$ where $\beta_1$ and $\beta_2$ are the Brauer characters of faithful  irreducible representations of $\widehat{H}$ of degree two and four respectively. Since  $\beta_2$  has integer values, and since the only non-integer value of $\beta_1$ is $r2$, it follows from \cite[Theorem VII.1.17]{HB} that the corresponding representations of $\widehat{H}$  are realisable over $\GF(p)$ and absolutely irreducible. Thus   
$V\downarrow\widehat{H}=U\oplus W$ where $U$ and $W$ are absolutely irreducible representations of $\widehat{H}$ over $\GF(p)$ with degree two and four respectively.
%with $\dim(U)=2$ and $\dim(W)=4$.  
Since $p\geq 7$, Lemma~\ref{lem:spsubs} implies that $U$ and $W$ are nondegenerate and hence the stabiliser in $\Sp(6,p)$ of this decomposition is $\Sp(2,p)\times \Sp(4,p)$. Since $\widehat{H}$ is absolutely irreducible on $U$ and $W$, it follows from Schur's Lemma that the centraliser of $\widehat{H}$ in $\Sp(6,p)$ is $\Z(\Sp(2,p))\times \Z(\Sp(4,p))$. By Lemma~\ref{lem:normA4}, $\Sym(4)$ is self-normalising in $\PSL(2,p)$ and hence $\widehat{H}$ is self-normalising in $\Sp(2,p)$. Thus $\N_{\Sp(6,p)}(\widehat{H})=\C_{\Sp(6,p)}(\widehat{H})\widehat{H}$ and so $|\N_{G}(H):H|=2$. This verifies Row (12) of Table~\ref{table:maxS5}.

Next suppose that $p\equiv \pm 3\pmod{8}$. It follows from \cite[Table 8.29]{BHRD} that $M\cong\Alt(5)\leqslant\PSp(6,p)$. Let $\widehat{M}\cong 2 \nonsplit \Alt(5)$ be the preimage of $M$ in $\Sp(6,p)$.  When $p\equiv \pm3,\pm13\pmod{40}$, \cite[Table 8.29]{BHRD} asserts that $M$ is maximal in $\PSp(6,p)$ and, moreover, $X:=\N_{\PGSp(6,p)}(M)\cong \Sym(5)$ is a maximal subgroup of $\PGSp(6,p)$.
  When $p\equiv \pm 11,\pm 19\pmod {40}$,  $M$ is not maximal in $\PSp(6,p)$ but $X:=\N_{\PGSp(6,p)}(M)\cong \Sym(5)$ is maximal in $\PGSp(6,p)$.  As usual, we denote the preimage of $X$ in $\GSp(6,p)$ by $\widehat{X}$. Let $\widehat{H}=2 \nonsplit \Alt(4)$ be the subgroup of $\widehat{M}$ corresponding to $H$. Note that $V\downarrow \widehat{M}$ is absolutely irreducible. Let $\chi$ be the character for $V\downarrow \widehat{M}$ and let $F$ be a splitting field for $2 \nonsplit \Alt(4)$. By the Brauer character table of $2\nonsplit \Alt(4)$,  we conclude that $\chi=\chi_1+\chi_2+\chi_3$ over $F$, where the $\chi_i$ are the  three irreducible representations of $2 \nonsplit \Alt(4)$ of degree two. Moreover, when $p\equiv 1 \pmod 3$, we may take $F=\GF(p)$, and when $p\equiv 2 \pmod 3$, we may take $F=\GF(p^2)$. 
%  Let $\chi$ be the character for $V\downarrow \widehat{M}$ and $\beta$ the corresponding Brauer character. It follows from the complex character table of $2\nonsplit \Alt(4)$ that  $\beta\downarrow 2\nonsplit \Alt(4) = \beta_1+\beta_2+\beta_3$, where the $\beta_i$ are the Brauer characters of the  three distinct irreducible representations of $2 \nonsplit \Alt(4)$ of degree two. Let $F$ be a splitting field for $2 \nonsplit \Alt(4)$, and for each $i$, let $\chi_i$ be the irreducible character over $F$ corresponding to $\beta_i$. Then  $\chi=\chi_1+\chi_2+\chi_3$ over $F$.  When $3$ divides $p-1$, we may take $F=\GF(p)$, while when $3$ divides $p+1$, we may take $F=\GF(p^2)$. 
We divide our analysis into these two cases.

Suppose first that $p\equiv 1 \pmod 3$ and $F=\GF(p)$. In this case $V$ splits as the sum of three irreducible spaces $U, W_1$ and $W_2$ for $\widehat{H}$ of dimension two. By looking at the character tables and using Lemmas~\ref{semisimple} and~\ref{lem:spsubs}, it follows that $U$ is nondegenerate while $W_1$ and $W_2$ are complementary totally isotropic subspaces. By Lemma~\ref{lem:spsubs}, the partwise stabiliser in $\Sp(6,p)$ of the decomposition of $V$ preserved by $\widehat{H}$ is $\Sp(2,p)\times \GL(2,p)$.  Since the actions of $\widehat{H}$ on $W_1$ and $W_2$ are dual, the centraliser in $\Sp(6,p)$ of $\widehat{H}=2\nonsplit \Alt(4)$ is $Z_1\times \Z(\GL(2,p))$ where $Z_1=\Z(\Sp(2,p))$.  By Lemma~\ref{lem:normA4}, $\Alt(4)$ is self-normalising in $\PSp(2,p)\cong\PSL(2,p)$ when $p\equiv \pm 3\pmod{8}$ and hence $\N_{\Sp(6,p)}(\widehat{H})=\widehat{H}\C_{\Sp(6,p)}(\widehat{H})$. Thus $\N_{\PSp(6,p)}(H)/H$ is a cyclic group of order $p-1$. This verifies row (9) of Table~\ref{table:maxA5}.

Now consider $\widehat{X}$. It has an index five subgroup $\widehat{R}$ containing $Z=\Z(\GSp(6,p))\cong \ZZ_{p-1}$ such that $R=\widehat{R}/Z\cong \Sym(4)$ and $\widehat{R}$ normalises $\widehat{H}$. Now $\widehat{R}$ must preserve the decomposition $V=U\perp (W_1\oplus W_2)$ fixed by $\widehat{H}$. The partwise stabiliser of this partition in $\GSp(6,p)$ is $(\Sp(2,p)\times \GL(2,p))\rtimes \langle \delta\rangle$ where $\delta$ is an element of order $p-1$ that centralises the $\GL(2,p)$ and generates $\GSp(2,p)$ with $\Sp(2,p)$. Since $\widehat{R}$ contains an element that does not centralise $\widehat{H}$, it follows that $\widehat{R}$ must interchange $W_1$ and $W_2$. In particular, $|\C_{Z}(\widehat{R})|=2$. It follows that $\C_{\GSp(6,p)}(\widehat{R})=Z_1Z$ and hence $|\N_{\PGSp(6,p)}(R):R|=2$.  This verifies row (14) of Table~\ref{table:maxS5} when $p\equiv 1 \pmod 3$.

We now assume that $p\equiv 2 \pmod 3$ and  $F=\GF(p^2)$. It follows from the Brauer character table of $2\nonsplit \Alt(4)$ that  $\chi_1$ can be realised over $\GF(p)$ while $\chi_2$ and $\chi_3$ cannot, hence  the restriction of $V$ to $\widehat{H}$ must decompose as $V=U\oplus W$ with $\dim(U)=2$ and $\dim(W)=4$. Since $\dim(U)\neq \dim(W)$ it follows from Lemma~\ref{lem:spsubs} that $U$ and $W$ are both nondegenerate and hence the stabiliser of this decomposition in $\Sp(6,p)$ is $\Sp(2,p)\times\Sp(4,p)$. Moreover, the image of $2\nonsplit \Alt(4)$ in the group induced on $W$ is contained in the subgroup $\ZZ_{p+1}\circ \Sp(2,p^2)$. Thus the centraliser of $\widehat{H}$ in $\Sp(6,p)$ is equal to $Z_1\times Z_2$ where $Z_1=\Z(\Sp(2,q))$ and $Z_2$ has order $p+1$. Since $p\equiv \pm 3\pmod{8}$, we again have that $\N_{\Sp(6,p)}(\widehat{H})=\widehat{H}\C_{\Sp(6,p)}(\widehat{H})$ and hence $\N_{\PSp(6,p)}(H)/H$ is cyclic of order $p+1$.  This verifies row (10) of Table~\ref{table:maxA5}.

Now consider $\widehat{X}$ and again let $Z=\Z(\GSp(6,p))$. Again it has an index five subgroup $\widehat{R}$ containing $Z$ such that $R=\widehat{R}/Z\cong \Sym(4)$ and $\widehat{R}$ normalises $\widehat{H}$. Also $\widehat{R}$ must preserve the decomposition of $V=U\oplus W$ preserved by $\widehat{H}$. The stabiliser in $\GSp(6,p)$ of this decomposition is $(\Sp(2,p)\times\Sp(4,p))\rtimes\langle \delta\rangle$ where $\delta$ has order $p-1$ and acts as an outer automorphism of order $p-1$ on both $\Sp(2,p)$ and $\Sp(4,p)$.  Consider $\widehat{H}$ acting on $V'=V\otimes \GF(p^2)$ as a 6-dimensional space over $\GF(p^2)$. Since $\GF(p^2)$ is a splitting field for $\widehat{H}$, we have that $\widehat{H}$ decomposes $V'$ as a nondegenerate $2$-space and two totally isotropic 2-spaces. The partwise stabiliser in $\GSp(6,p^2)$ of this decomposition is $(\Sp(2,p^2)\times \GL(2,p^2))\rtimes\langle \delta\rangle$ where $\delta$ is an element of order $p^2-1$ that centralises the $\GL(2,p^2)$ and, together with $\Sp(2,p^2)$, generates $\GSp(2,p^2)$. Since $\widehat{R}\backslash \widehat{H}$ contains an element that does not centralise $\widehat{H}$, it follows that $\widehat{R}$ (when  viewed as acting on $V'$)  must interchange the two totally isotropic $2$-spaces. Thus $\widehat{R}$ is absolutely irreducible on $W$. Hence  $\C_{GSp(6,p)}(\widehat{R})=Z_1Z$ and $|\N_{\PGSp(6,p)}(R):R|=2$. This completes the verification of row (14) of Table~\ref{table:maxS5}.
\end{proof}

\section{Proof of Corollary~\ref{cor:halfarc}}\label{sec:HAT}

Suppose, to the contrary, that $\Gamma$ is a half-arc-transitive vertex-primitive graph of valency $10$, let $G$ be its automorphism group, and let $(u,v)$ be an arc of $\Gamma$. Let $\vGa$ be the digraph with the same vertex-set $\V\Gamma$ as $\Gamma$ and with arc-set $(u,v)^G$. Note that $\vGa$ is an asymmetric $G$-arc-transitive digraph of out-valency $5$. In particular, $G_v$ has an orbit of length $5$ and  $(G,G_v)$ appears in Table~\ref{table:sporadic} or~\ref{table:infinite}. It follows that $G$ is either affine or almost simple.

If $G$ is of affine type, it has a regular elementary abelian subgroup $R$ and $\Gamma$ is a Cayley graph on $R$, with connection set $S$, say. Since $R$ is abelian, the permutation sending every element of $R$ to its inverse  is an automorphism of $\Gamma$. On the other hand, if $s\in S$, then the composition of the inversion map with multiplication by $s$ is an automorphism of $\Gamma$ that reverses the arc $(1,s)$, contradicting the fact that $\Gamma$ is half-arc-transitive.

We may now assume that $G$ is almost simple. If $G_v$ is not isomorphic to $\Alt(5)$ or $\Sym(5)$ then, as in the proof of Theorem~\ref{theo:mainGraph}, there are only finitely many possibilities which can be handled on a case-by-case basis. These yield no examples. We may therefore assume that $G_v$ is isomorphic to $\Alt(5)$ or $\Sym(5)$. In particular, by Theorem~\ref{theo:max}, $G$ appears in Table~\ref{table:maxA5} or~\ref{table:maxS5}. By Lemma~\ref{lemma:selfnormalising}~(\ref{one}-\ref{two}), we may restrict our attention to rows where $\N_G(H)/H$ contains an element of order at least $3$. In particular, $G\cong\PSp(6,p)$ for some prime $p$ with $p\equiv \pm 3,\pm 13 \pmod{40}$ and $G_v\cong\Alt(5)$. 

Let $H=G_{uv}$ and note that $H\cong\Alt(4)$. Let $G^*=\PGSp(6,p)$.  By \cite[Table 8.29]{BHRD},
 $\N_{G^*}(G_v)\cong \Sym(5)$ and hence $G^*\leq\N_{\Sym(\V\Gamma)}(G)$ and $G^*_v=\N_{G^*}(G_v)$.  Let $\Delta$ be the orbit of $G^*_v$ containing $u$. If $\Delta$ is also an orbit of $G_v$, then $(u,v)^{G^*}=(u,v)^G$ and $G^*$ is contained in the automorphism group $G$ of $\Gamma$, a contradiction. Since $|G^*_v:G_v|=2$, the only other possibility is that $\Delta$ is a union of two orbits of $G_v$ of the same size, namely $5$. In particular $|\Delta|=10$.  It follows that $G^*_{uv}=H$.  Let $H^*=\N_{G^*_v}(H)$. Note that $H^*\cong\Sym(4)$. Since $H$ is a characteristic subgroup of $H^*$, we have that $\N_{G^*}(H)=\N_{G^*}(H^*)$. If $p=3$, then Table~\ref{table:maxS5} implies that $\N_{G^*}(H)=H^*$. If $p\neq 3$, then it follows by Table~\ref{table:maxS5} (and the fact that $\Sym(4)$ is a complete group) that $\N_{G^*}(H)=H^*\times Z$ for some $Z\cong\ZZ_2$. In both cases, we have that $\N_{G^*}(H)/H$ is an elementary abelian $2$-group.

Let $\Gamma^*$ be the digraph with vertex-set $\V\Gamma$ and with arc-set $(u,v)^{G^*}$. Since $|\Delta|=10$, $\Gamma^*$ has out-valency $10$. Let $w'$ be an out-neighbour of $v$. As $\Gamma^*$ is $G^*$-arc-transitive, $H$ and $G^*_{vw'}$ are conjugate in $G$ and, in particular, isomorphic. On the other hand, $G^*_v$ has a unique conjugacy class of subgroups isomorphic to $\Alt(4)$, and hence $H$ and $G^*_{vw'}$ are conjugate in $G^*_v$. It follows that $H=G^*_{vw}$ for some out-neighbour $w$ of $v$ in $\Gamma^*$. Since $\Gamma^*$ is $G^*$-arc-transitive, there exists $g\in G^*$ such that $(u,v)^g=(v,w)$. Note that $g$ normalises $H$. By the previous paragraph, this implies that $g^2\in H$. However $u^{g^2}=v^g=w$ and so $u=w$ and $\Gamma^*$ is actually a graph. Since $G<G^*$, $\vGa$ is a sub-digraph of $\Gamma^*$ and hence $\Gamma^*=\Gamma$. This implies that $G^*$ is contained in the automorphism group of $\Gamma$ which is a contradiction.

\section{Proof of Corollary~\ref{cor:halfarc2}}\label{sec:HAT2}

We first need the following lemma.
\begin{lemma}\label{subdeg6}
Let $p$ be a prime with $p\equiv 7,23\pmod{40}$, let $G=\PSp(6,p)$ and let $M$ be a  maximal subgroup of $G$ isomorphic to $\Sym(5)$. If $H$ is a subgroup of index $6$ in $M$, then $\N_G(H)/H\cong \ZZ_{p+1}$.
\end{lemma}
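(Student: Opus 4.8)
The plan is to follow the same strategy as in the proof that Theorem~\ref{theo:max} holds for $T\cong\PSp(6,p)$, but applied to the index-$6$ subgroup $H\cong\AGL(1,5)$ (the Frobenius group $\ZZ_5\rtimes\ZZ_4$ of order $20$, which is the unique conjugacy class of subgroups of index $6$ in $\Sym(5)$). Since $p\equiv 7,23\pmod{40}$ we have $p\equiv -1\pmod 8$ and $p\equiv \pm 2\pmod 5$, and in particular $p\geq 7$; so by \cite[Table 8.29]{BHRD} (as in row (12) of Table~\ref{table:maxS5}) $M$ is maximal in $G=\PSp(6,p)=T$, its preimage $\widehat M\cong 2\nonsplit\Sym(5)^{-}$ in $\Sp(6,p)$ acts absolutely irreducibly on the natural module $V$, and I write $\widehat H\cong 2\nonsplit\AGL(1,5)$ for the preimage of $H$. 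As $p\nmid|\widehat H|=40$, the Brauer character table of $\widehat H$ coincides with its ordinary one.

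First I would restrict $V$ to the normal Sylow $5$-subgroup $\langle a\rangle$ of $\widehat H$. Reading the character of the $6$-dimensional symplectic representation on an element of order $5$ (it takes the value $1$), the eigenvalues of $a$ on $V$ are $1,1$ together with the four primitive fifth roots of unity. Hence $V{\downarrow}\langle a\rangle=U\oplus W$, where $U$ is the $2$-dimensional fixed space of $a$ and $W$ is the $4$-dimensional sum of the nontrivial eigenspaces; as the complement $\langle b\rangle\cong\ZZ_4$ of $\langle a\rangle$ normalises $\langle a\rangle$ and fixes its trivial character, both $U$ and $W$ are $\widehat H$-invariant. Because the symplectic form is $\widehat H$-invariant and pairs the $\lambda$-eigenspace of $a$ only with its $\lambda^{-1}$-eigenspace, the trivial eigenspace pairs only with itself; thus $V=U\perp W$ with both summands nondegenerate, and Lemma~\ref{lem:spsubs}(i) identifies the partwise stabiliser of this decomposition in $\Sp(6,p)$ as $\Sp(2,p)\times\Sp(4,p)$.

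Next I would pin down the endomorphism algebras of $U$ and $W$ as $\GF(p)\widehat H$-modules, since by Schur's Lemma these control the centraliser. On $W$, the element $a$ acts with the four primitive fifth roots as eigenvalues; as $p\equiv \pm 2\pmod 5$ the Frobenius $x\mapsto x^p$ permutes these in a single $4$-cycle, so $\langle a\rangle$—and hence $\widehat H$—is absolutely irreducible on $W$ and $\End_{\GF(p)\widehat H}(W)=\GF(p)$. On $U$ the subgroup $\langle a\rangle$ acts trivially, so $U$ is a faithful $2$-dimensional module for $\widehat H/\langle a\rangle$; reading the character table of $2\nonsplit\Sym(5)^{-}$ shows that the lift $\widehat b$ of a $4$-cycle satisfies $\widehat b^{\,4}=-I$ and so acts on $U$ with primitive eighth roots of unity as eigenvalues. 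Since $p\equiv -1\pmod 8$ these lie in $\GF(p^2)\setminus\GF(p)$, whence $U$ is irreducible but not absolutely irreducible and $\End_{\GF(p)\widehat H}(U)=\GF(p^2)$. Consequently $\C_{\Sp(6,p)}(\widehat H)$ preserves the decomposition and acts by scalars from these fields: on the nondegenerate $4$-space $W$ the only symplectic scalars are $\pm I_W$, while on the nondegenerate $2$-space $U$, viewed as a $1$-dimensional $\GF(p^2)$-space, the symplectic scalars form the norm-one torus $T_1\cong\ZZ_{p+1}$. Hence $\C_{\Sp(6,p)}(\widehat H)=T_1\times\langle -I_W\rangle$, and since $-I=(-I_U)(-I_W)$ with $-I_U\in T_1$, we get $\C_{\Sp(6,p)}(\widehat H)/\langle -I\rangle\cong\ZZ_{p+1}$.

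Finally, and this is the step I expect to be the main obstacle, I would show that $\N_{\Sp(6,p)}(\widehat H)=\widehat H\,\C_{\Sp(6,p)}(\widehat H)$, so that $\N_G(H)/H\cong \C_{\Sp(6,p)}(\widehat H)/\langle -I\rangle\cong\ZZ_{p+1}$ (using $\Z(\widehat H)=\langle -I\rangle$, so $\widehat H\cap\C_{\Sp(6,p)}(\widehat H)=\langle -I\rangle$). Any $g\in\N_{\Sp(6,p)}(\widehat H)$ induces an automorphism of $\widehat H$ which, modulo $\langle -I\rangle$, is an automorphism of $\AGL(1,5)$; as $\AGL(1,5)$ is the holomorph of $\ZZ_5$ and hence a complete group, this is inner, so after multiplying $g$ by a suitable element of $\widehat H$ I may assume $g$ centralises $H$ and thus induces a central automorphism $\widehat h\mapsto \widehat h\,(-I)^{\lambda(\widehat h)}$ for some homomorphism $\lambda\colon\widehat H\to\langle -I\rangle$. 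The only possible nontrivial such $\lambda$ has $\lambda(a)=0$ (as $a$ has odd order) and $\lambda(\widehat b)=1$; but realising it would force $g$ to conjugate $\widehat b|_U$ to $-\widehat b|_U$ inside $\Sp(2,p)$, which is impossible, since conjugation can only send the torus element $\widehat b|_U$ (of order $8$) to itself or to its inverse, and $-\widehat b|_U$ is neither. Hence $\lambda$ is trivial, $g\in\C_{\Sp(6,p)}(\widehat H)$, and the claim follows. The delicate point throughout is the interplay of the two exceptional congruences: $p\equiv\pm 2\pmod 5$ forces $W$ to be absolutely irreducible (so the $4$-space contributes nothing beyond $\pm I_W$), while $p\equiv -1\pmod 8$ both supplies the $\GF(p^2)$-structure on $U$ that produces the torus $\ZZ_{p+1}$ and makes $\widehat b|_U$ have order $8$, which is exactly what excludes an extra normalising involution.
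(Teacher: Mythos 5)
Your proof reaches the correct conclusion and shares the paper's overall skeleton: decompose $V{\downarrow}\widehat{H}$ as a nondegenerate $2$-space $U$ perpendicular to a nondegenerate $4$-space $W$, compute $\C_{\Sp(6,p)}(\widehat{H})\cong\ZZ_{p+1}\times\ZZ_2$ via Schur's Lemma, and then show $\N_{\Sp(6,p)}(\widehat{H})=\widehat{H}\,\C_{\Sp(6,p)}(\widehat{H})$. However, one justification is wrong as written: you claim that ``$\langle a\rangle$ --- and hence $\widehat H$ --- is absolutely irreducible on $W$''. An abelian group is never absolutely irreducible in dimension greater than one; what the single Frobenius orbit of the four primitive fifth roots of unity actually gives is that $W$ is irreducible for $\langle a\rangle$ with $\End_{\GF(p)\langle a\rangle}(W)\cong\GF(p^4)$. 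That alone bounds $\C_{\Sp(W)}(\widehat{H}|_W)$ only by the norm-one subgroup of $\GF(p^4)^*$, cyclic of order $p^2+1$ --- precisely the centraliser of $a|_W$ in $\Sp(4,p)$ that the paper quotes from \cite{BG} --- and if that torus really survived in the centraliser, your final answer would be wrong, so the step is consequential. The repair is short: since $\widehat{b}a\widehat{b}^{-1}=a^k$ with $k$ of order $4$ modulo $5$, conjugation by $\widehat{b}$ induces an order-$4$ field automorphism of $\GF(p^4)=\GF(p)[a|_W]$, so $\End_{\GF(p)\widehat{H}}(W)$ is the fixed field $\GF(p)$, giving absolute irreducibility of $W$ for $\widehat H$ and leaving only $\pm I_W$ in $\Sp(W)$. (Alternatively, read the absolute irreducibility of the $4$-dimensional constituent directly off the Brauer character table of $2\nonsplit\AGL(1,5)$, which is what the paper does.)

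Your normaliser step, by contrast, is correct and genuinely different from the paper's. The paper reduces to elements centralising the order-$5$ elements and then invokes two structural facts: the normaliser in $\Sp(2,p)$ of a cyclic group of order $8$ is $Q_{2(p+1)}$ (for $p\equiv -1\pmod 8$), and the centraliser of an order-$5$ element of $\Sp(4,p)$ is cyclic of order $p^2+1$ with Sylow $2$-subgroup $\Z(\Sp(4,p))$, since $4\nmid p^2+1$. You instead use that $\AGL(1,5)$, being the holomorph of $\ZZ_5$, is complete, so after an inner adjustment any normalising element induces a central automorphism $\widehat{h}\mapsto\widehat{h}(-I)^{\lambda(\widehat{h})}$; the unique nontrivial candidate $a\mapsto a$, $\widehat{b}\mapsto-\widehat{b}$ is then excluded because $\widehat{b}|_U$ and $-\widehat{b}|_U$ have different eigenvalue sets ($\{\theta,\theta^{-1}\}\neq\{-\theta,-\theta^{-1}\}$ for $\theta$ of order $8$), hence different traces, and so cannot be conjugate in $\Sp(2,p)$. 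This is a clean, elementary substitute for the paper's appeal to the quaternion normaliser and to \cite{BG}, and it isolates the same arithmetic inputs: $p\equiv -1\pmod 8$ and $p\equiv\pm 2\pmod 5$. With the absolute irreducibility of $W$ repaired as above, your argument is a valid proof of the lemma.
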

\begin{proof}
First, note that $M$ actually exists by Theorem~\ref{theo:max}. Note also that $\Sym(5)$ has a unique conjugacy class of subgroups of index $6$. These subgroups are maximal and isomorphic to $\AGL(1,5)$. Let $\widehat{M}$ and $\widehat{H}$ be the preimage of $M$ and $H$ in $\Sp(6,p)$, respectively. Note that $\widehat{M}\cong 2\nonsplit \Sym(5)^-$ and $\widehat{H}\cong \ZZ_5\rtimes\ZZ_8$. Let $V$ be the natural $6$-dimensional module for $\Sp(6,p)$ over $\GF(p)$. Since $p\equiv 7\pmod{8}$,  it follows from the Brauer character tables of $\widehat{M}$ and $\widehat{H}$ (available in \textsc{Magma}, for example) that  $V\downarrow \widehat{H}$ splits as  a sum of an absolutely irreducible $4$-dimensional subspace $W$ and an irreducible but not absolutely irreducible subspace $U$ of dimension $2$.
% and let $\chi$ be the character for $V\downarrow \widehat{M}$. Considering the character table of $\widehat{H}$ (available in \textsc{Magma}, for example),  we deduce that $\chi =\chi_1+\chi_2+\chi_3$, where $\chi_1$ is an irreducible character of degree 4 all of whose values are integers, and  $\chi_2$ and $\chi_3$ have degree $1$ and involve a fourth root of unity.  As $p\equiv 7\pmod{8}$, $\GF(p)$ does not contain a fourth root of unity and thus $V\downarrow \widehat{H}$ splits as  a sum of an absolutely irreducible $4$-dimensional subspace $W$ and an irreducible but not absolutely irreducible subspace $U$ of dimension $2$. 
Moreover, $\widehat{H}$ is faithful on $W$ while elements of order 5 in $\widehat{H}$ act trivially on $U$. Since $p\geq 7$, Lemma~\ref{lem:spsubs} implies that $U$ and $W$ are nondegenerate and hence the stabiliser in $\Sp(6,p)$ of this decomposition is $\Sp(2,p)\times \Sp(4,p)$. 
By Schur's Lemma, $\C_{\Sp(6,p)}(\widehat{H})=Z_1\times Z_2$ where $Z_1\cong\ZZ_{p+1}$ and $Z_2=\Z(\Sp(4,p))\cong\ZZ_2$. Since elements of order $5$ in $\widehat{H}$ act trivially on $U$, any element of $\N_{\Sp(6,p)}(\widehat{H}) \setminus \C_{\Sp(6,p)}(\widehat{H})\widehat{H}$, must centralise the elements of order $5$ in $\widehat{H}$.  Moreover, for $p\equiv -1\pmod 8$, the normaliser in $\Sp(2,p)$ of a cyclic group of order 8 is $Q_{2(p+1)}$. Now $5$ divides $p^2+1$, and so the centraliser $C$ of an element of order $5$ in $\Sp(4,p)$ is cyclic of order $p^2+1$ (see \cite[Proposition 3.4.3 and Remark 3.4.4]{BG}). However, 4 does not divide $p^2+1$ and so the Sylow 2-subgroup of $C$  is equal to $\Z(\Sp(4,p))=Z_2$. Thus $\N_{\Sp(6,p)}(\widehat{H})=C_{\Sp(6,p)}(\widehat{H})\widehat{H}$ and so 
$$\N_G(H)/H\cong\N_{\Sp(6,p)}(\widehat{H})/\widehat{H}\cong C_{\Sp(6,p)}(\widehat{H})/\Z(\widehat{H})=(Z_1\times Z_2)/Z_2\cong\ZZ_{p+1}.$$
\end{proof}

We are now ready to prove Corollary~\ref{cor:halfarc2}. Let $p$ be a prime with $p\equiv 7,23\pmod{40}$ and let $G=\PSp(6,p)$. By Theorem~\ref{theo:max}, there exists  a maximal subgroup $M$ of $G$ isomorphic to $\Sym(5)$. Note that $\Sym(5)$ has a unique conjugacy class of subgroups of index $6$, and these subgroups are maximal and not normal. Let $H$ be a subgroup of index $6$ in $M$. By Lemma~\ref{subdeg6}, $\N_G(H)/H\cong \ZZ_{p+1}$. By Lemma~\ref{lemma:selfnormalising}, there exists $\Gamma'$ a $G$-arc-transitive digraph of out-valency $6$ that is not a graph. Let $\Gamma$ be the underlying graph of $\Gamma'$ and let $A$ be the automorphism group of $\Gamma$. Note that $\Gamma$ has valency $12$ and  that $G\leqslant A\leqslant \Sym(V\Gamma)$. Since $\Alt(V\Gamma)\not\leqslant A$, it follows from \cite{LPS}  that $\soc(A)=G$ and thus $A=G$ or $A=\PGSp(6,p)$. However, by \cite[Table 8.29]{BHRD}, $M$ is self-normalising in $\PGSp(6,p)$ and thus $\PGSp(6,p)\not\leqslant\Sym(V\Gamma)$. It follows that $A=G$ and hence $\Gamma$ is half-arc-transitive. 

As there are infinitely many primes $p$ with $p\equiv 7,23\pmod{40}$, this proves that there are infinitely many vertex-primitive half-arc-transitive graphs of valency $12$, as required.

\medskip

\noindent\textsc{Acknowledgements.}
This research was supported by the Australian Research Council grants DE130101001, DP130100106 and DP150101066. We would like to thank the Centre for the Mathematics of Symmetry and Computation for its support, as this project started as a problem for the annual CMSC retreat in 2015.  We would also like to thank David Craven for his immense and timely help in dealing with the case of exceptional groups of Lie type, and Derek Holt for assistance with the computations relating to the Thompson group.


\begin{thebibliography}{99}
 
\bibitem{Asch} Aschbacher,~M. Finite group theory. Cambridge Studies in Advanced Mathematics, 10. Cambridge University Press, Cambridge, 1986.

%\bibitem{BDNP} Betten, A., Delandtsheer, A., Niemeyer, A. C., Praeger, C. E. On a theorem of Wielandt for finite primitive permutation groups, \textit{J. Group Theory}  \textbf{6}, (2003), 415--420. 
 
\bibitem{magma} Bosma,~W., Cannon,~J., Playoust,~C. The Magma algebra system. I. The user language. \textit{J. Symbolic Comput.} \textbf{24}  (1997), 235--265. 

\bibitem{BHRD} Bray, J. N., Holt, D. F., Roney-Dougal, C. M. The maximal subgroups of the low-dimensional finite classical groups. With a foreword by Martin Liebeck. London Mathematical Society Lecture Note Series, 407. Cambridge University Press, Cambridge, 2013.

\bibitem{BrayWilson} Bray, J. N., Wilson, R. A. Explicit representations of maximal subgroups of the Monster.  \textit{J. Algebra} \textbf{300} (2006), 834--857.

\bibitem{BG} Burness, T. C, Giudici, M. Classical Groups, Derangements and Primes, Cambridge University Press,  Australian Mathematical Society Lecture Series, 25,  2016.

\bibitem{atlas}  Conway, J.~H., Curtis, R.~T., Norton, S.~P., Parker, R.~A., Wilson, R.~A.  Atlas of finite groups. Maximal subgroups and ordinary characters for simple groups. With computational assistance from J. G. Thackray. Oxford University Press, Eynsham, 1985. 

\bibitem{Colva} Coutts, H. J., Quick, M., Roney-Dougal, C. M. The primitive permutation groups of degree less than $4096$. \textit{Comm. Algebra} \textbf{39} (2011), 3526--3546. 

\bibitem{Craven} Craven, D. Alternating Subgroups of Exceptional Groups of Lie Type, arXiv:1509.00534.

\bibitem{dickson}Dickson, L.~E. Linear groups: {W}ith an exposition of the {G}alois field theory. Dover Publications Inc., New York, 1958.
  
\bibitem{DM} Dixon, J. D., Mortimer, B.  Permutation groups.  Graduate Texts in Mathematics, 163. Springer-Verlag, New York, 1996.

\bibitem{gap} The \GAP\ Group, \emph{GAP -- Groups, Algorithms, and Programming, Version 4.8.3}; 2016, \verb+(http://www.gap-system.org)+.

\bibitem{HB}Huppert, B., Blackburn, N. Finite groups. II. Grundlehren der Mathematischen Wissenschaften [Fundamental Principles of Mathematical Sciences], 242. AMD, 44. Springer-Verlag, Berlin-New York, 1982. 

\bibitem{Isaacs} Isaacs, I.M. Character Theory of Finite Groups. Dover Publications Inc., New York, 1994.

\bibitem{James}James, G. D. The representation theory of the symmetric groups, Lecture Notes in Mathematics, 682. Springer, Berlin, 1978.

\bibitem{modatlas} Jansen, C., Lux, K., Parker, R., Wilson, R. An atlas of Brauer characters. Appendix 2 by T. Breuer and S. Norton. London Mathematical Society Monographs. New Series, 11. Oxford Science Publications. The Clarendon Press, Oxford University Press, New York, 1995. 

\bibitem{KL} Kleidman, P., Liebeck, M. The subgroup structure of the finite classical groups. London Mathematical Society Lecture Note Series, 129. Cambridge University Press, Cambridge, 1990.

\bibitem{Knapp76} Knapp, W. Primitive Permutationsgruppen mit einem zweifach primitiven Subkonstituenten. \textit{J. Algebra}  \textbf{38} (1976), 146--162.

\bibitem{Li}Li, C. H. The finite vertex-primitive and vertex-biprimitive $s$-transitive graphs for $s\geq 4$. \textit{Trans. Amer. Math. Soc.} \textbf{353} (2001), 3511--3529. 

\bibitem{LiLuMar}Li, C. H., Lu, Z. P., Maru\v{s}i\v{c}, D. On primitive permutation groups with small suborbits and their orbital graphs. \textit{J. Algebra} \textbf{279} (2004), 749--770.

\bibitem{LPS}Liebeck, M. W., Praeger, C. E., Saxl, J. A classification of the maximal subgroups of the finite alternating and symmetric groups. \textit{J. Algebra} \textbf{111} (1987), 365--383.

\bibitem{NortonWilson} Norton, S. P., Wilson, R. A. A correction to the $41$-structure of the Monster, a construction of a new maximal subgroup $L_2(41)$ and a new Moonshine phenomenon.  \textit{J. London Math. Soc.} \textbf{87} (2013), 943--962.

\bibitem{2AT} Praeger, C. E. Primitive permutation groups with a doubly transitive subconstituent. \textit{J. Austral. Math. Soc. Ser. A} \textbf{45} (1988), 66--77.

\bibitem{Quirin} Quirin, W. L. Primitive permutation groups with small orbitals.  \textit{Math. Z.} \textbf{122} (1971), 267--274. 

\bibitem{Sims} Sims, C. C. Graphs and finite permutation groups.  \textit{Math. Z.} \textbf{95} (1967), 76--86. 

%\bibitem{Sims2} Sims, Charles C., Graphs and finite permutation groups. II,  \textit{Math. Z.} \textbf{103} (1968), 276--281. 
%\bibitem{Wang} Wang, Primitive Permutation Groups with a Sharply $2$-Transitive Subconstituent,  \textit{J. Algebra} \textbf{176} (1995), 702--734.

\bibitem{Wall} Wall, G.~E. On the conjugacy classes in the unitary, symplectic and orthogonal groups. \textit{J. Austral. Math. Soc.}, \textbf{3} (1963), 1--62.

\bibitem{Wang} Wang, J. The primitive permutation groups with an orbital of length $4$.  \textit{Comm. Algebra} \textbf{20} (1992), 889--921.

\bibitem{WangSoluble5} Wang, J. Primitive permutation groups with a solvable subconstituent of degree $5$.  \textit{Beijing Daxue Xuebao Ziran Kexue Ban} \textbf{31} (1995), 520--526.

\bibitem{WangA5} Wang, J. Primitive permutation groups with an unfaithful subconstituent containing $A_5$.  \textit{Algebra Colloq.} \textbf{3} (1996), 11--18.

\bibitem{Weiss} Weiss, R. A characterization and another construction of Janko's group $J_3$. \textit{Trans. Amer. Math. Soc.} \textbf{298} (1986), 621--633.

\bibitem{wilson} Wilson, R. A. Some subgroups of the Thompson group. \textit{J. Austral. Math. Soc. Ser. A} \textbf{ 44} (1988),  17--32.

\bibitem{Wilsonmax} Wilson, R. A. The maximal subgroups of the Baby Monster, I. \textit{J. Algebra} \textbf{211} (1999),  1--14.

\bibitem{AtlasRep} Wilson, R. A., Parker, R. A., Nickerson, S.,  Bray, J. N., Breuer, T. \textsc{AtlasRep}, A \GAP\ Interface to the Atlas of Group Representations, Version 1.5, 2011. \verb+(http://www.gap-system.org/Packages/atlasrep.html)+.

\bibitem{onlineatlas} Wilson, R. A., Walsh, P., Tripp, J., Suleiman, I.,  Parker, R. A.,  Norton, S. P., Nickerson, S., Linton, S.,  Bray, J. N.,  Abbot, R.  ATLAS of Finite Group Representations. \verb+(http://brauer.maths.qmul.ac.uk/Atlas/)+. 


\bibitem{Wong} Wong, W. J. Determination of a class of primitive permutation groups.  \textit{Math. Z.} \textbf{99} (1967), 235--246.
\end{thebibliography}
\end{document}